\documentclass[a4paper,11pt]{article}

\usepackage{setup/mysetup}
\usepackage{setup/mysymbols}
\usepackage{setup/myarticle}
\usepackage{setup/customsetup}

\graphicspath{{figures/}}

\title{Managing delay in tail assignment: from minimum turn time to stochastic routing at Air France}

\date{\today}
\author{Léo Baty, Axel Parmentier}

\begin{document}

\maketitle

\begin{abstract}
    On-time performance is a critical challenge in the airline industry, leading to large operational and customer dissatisfaction costs.
    The tail assignment problem builds the sequences of flights or routes followed by individual airplanes.
    While airlines cannot avoid some sources of delay, choosing routes wisely limits propagation along these.
    This paper addresses the stochastic tail assignment problem at Air France.
    We propose a column generation approach for this problem.
    The key ingredient is the pricing algorithm, which is a stochastic shortest path problem.
    We use dedicated bounds to discard paths in an enumeration algorithm, and introduce new bounds based on a lattice ordering of the set of piecewise linear convex functions to strike a balance between bounds quality and computational cost.
    A diving heuristic enables us to retrieve integer solutions.
    Numerical experiments on real-world Air France instances demonstrate that our algorithms lead to an average 0.28\% optimality gap on instances with up to 600 flight legs in a few hours of computing time. The resulting solutions effectively balance operational costs and delay resilience, outperforming previous approaches based on minimum turn time.

    \bigskip\noindent\textbf{Keywords:} Stochastic optimization, tail assignment, column generation, constrained shortest path, diving heuristic.
\end{abstract}

\section{Introduction}

Aircraft routing represents one of the most impactful applications of Operations Research in the airline industry, with direct consequences for operational efficiency, fuel consumption, and on-time performance.
In a highly competitive industry, optimizing these routes can yield significant competitive advantages.
This problem, solved daily by airlines worldwide, consists in building sequences of flight legs and maintenances operated by aircraft of various fleets, commonly called aircraft routes.
This so-called \emph{aircraft routing} problem can be solved at various time horizons before operations.
Depending on the considered time horizon and distance to operations, available information varies, and so does the problem constraints and objective.
When the horizon is located a few days before the day of operations, the problem is then called \emph{tail assignment}.
In this context, all available aircraft in the fleet are known along with their locations and scheduled maintenances, and therefore sequences of legs and maintenances need to be assigned to each aircraft.

While aircraft routing is usually considered a feasibility problem, tail assignment is in practice an optimization problem.
Aircraft routes from the tail assignment directly affect operational costs (mainly fuel). Airlines therefore want to maximize the usage of aircraft that consume low amount of fuel, which leads to small turn times.
However, low turn times favor delay propagation along these routes, causing additional costs, potentially disrupting hundreds of passengers travel plans.
These costs extend beyond direct operational expenses to include customer compensation, unscheduled emergency crew costs, and damage to the airline's reputation.
If slacks between flight legs are too tight, one flight being late can quickly snowball into all subsequent flights being late.

Planning time buffers between critical flights is a good way to control and prevent delay propagation, but can increase operational costs and affect the ability to operate all legs with the available fleet.
Ensuring a good balance between operational costs and delay costs is therefore crucial in order to build resilient and efficient aircraft routes.

In practice, airlines such as Air France try to mitigate delay propagation by enforcing fixed time buffers between legs through hard constraints.
They do that by artificially increasing the scheduled turn time values between legs, which indirectly ensures low delay propagation along any possible aircraft routes.
The main benefit of this approach is that it is quite simple to implement in practice: the problem can be modeled as a mixed integer linear program that is solved very efficiently.
However, this procedure has several drawbacks.
First, unnecessarily adding time constraints between legs can lead to suboptimal solutions by preventing lower cost solutions feasibility.
Second, the main issue that hinders the usability of this approach in practice is that it often makes instances infeasible.
Indeed, instances that would have been feasible without these additional constraints can become infeasible when these constraints are added.
Not being able to provide a solution such that all legs are operated is not acceptable in an industrial context, and can prevent the usage of optimization in this context.

Previous approaches to this problem have largely focused on either purely deterministic models that ignore delay uncertainty or on robust optimization techniques that can be overly conservative. While some stochastic models exist, they often struggle to balance computational tractability with realistic delay modeling.
We address these issues by introducing a scalable stochastic optimization approach.

\subsection{Related work}

The airline industry extensively uses Operations Research techniques to solve optimization problems on a daily basis (see \cite{barnhartApplicationsOperationsResearch2003} for a general review).
The aircraft routing problem is one of the most studied in the literature.
Many variants exist, among which the tail assignment problem~\parencite{gronkvistTailAssignmentProblem2005a}, which is the focus of this paper.

Most papers focus on variants of the traditionally deterministic version of the problem, by either considering the aircraft routing as a feasibility problem, or by minimizing operational costs.
For these deterministic variants, several pure operations research techniques are state-of-the-art and very efficient for this task.
Most compact mixed integer program formulations are formulated as a multi-commodity flow based on an acyclic digraph modeling the feasible connections between flight legs, introduced by \cite{haneFleetAssignmentProblem1995} and defined in Subsection~\ref{subsec:event-graph}.
Following works provide similar formulations, and improve the efficiency of the algorithms to solve them, such as \cite{khaledCompactOptimizationModel2018} who propose a compact formulation for the tail assignment.
Another popular approach is to use column generation algorithms to solve a route based formulation, such as \cite{gabteniHybridColumnGeneration2006} who propose a hybrid column generation and constraint programming approach for the tail assignment.
Some works combine aircraft routing with another operational problem, such as crew pairing, as \cite{parmentierAircraftRoutingCrew2020} who consider an integrated column generation formulation for aircraft routing and crew pairing problem at Air France.

While these deterministic variants are well studied, delay resilient versions of the tail assignment problem are less common in the literature.
Some approaches model it as a two-stage stochastic optimization problem with recovery and recourse decisions, such as \cite{khaledMulticriteriaRepairRecovery2018} who propose a multi-objective repair real-time replanning approach for aircraft routing, or \cite{glombStochasticOptimizationApproach2023} who propose a Benders decomposition approach for recovery of predictive maintenance for failure scenarios.
In this paper, we want to build resilient routes by minimizing delay costs, that are directly impacted by the delay propagation.
One of the first stochastic formulations for the aircraft routing problem has been proposed by \cite{lanPlanningRobustAirline2006a} with a mixed-integer program with a simple delay model based on historical delay data to solve the robust maintenance routing problem.
Some other studies use robust optimization techniques to model uncertainty, such as \cite{yanRobustAircraftRouting2018} who propose a robust optimization model for the aircraft routing problem with uncertainty sets on delays.
\cite{marlaRobustOptimizationLessons2018} also use robust optimization techniques to model delays, and propose a simulator to generate delay scenarios.

To the best of our knowledge, \cite{biroliniDayaheadAircraftRouting2023} is the only work that addresses a similar stochastic routing problem with delay considerations.
However, their approach differs from ours in several important aspects.
First, their work primarily focuses on developing a sophisticated delay prediction model, using a queuing model combined with machine learning to predict delays, while our focus is on optimization techniques with given delay scenarios.
Second, they consider a one-day horizon, while we consider a one-week horizon, resulting in significantly larger and more complex optimization problems.
Their smaller problem instances make the underlying optimization problem easier to solve, for which exhaustive enumeration of all solutions is sufficient to find the optimal one.
In contrast, our larger instances require more sophisticated optimization techniques, including both a compact MIP formulation and a column generation approach.
Finally, they do not consider operational costs, while we consider both operational and delay costs simultaneously, providing a trade-off between them with additional operational constraints such as aircraft scheduled maintenances.

\subsection{Contributions}

In this paper, we present a novel approach to solve the stochastic tail assignment problem for optimizing operational and delay costs.
\begin{enumerate}
    \item We introduce a stochastic optimization model for the tail assignment with delay. It differs from the literature in two ways: it focuses on tail assignment instead of aircraft routing, and deals with a scenario based version (sample average approximation), which enables to capture dependencies in the delay distribution.
    \item Our main contribution is a column generation with a diving heuristic for the sample average approximation for the stochastic tail assignment problem. 
    \item The pricing algorithm is new, its originality lies in the computation of bounds for faster path enumeration and filtering. We propose and implemented two open-source Julia libraries to support this algorithm: \texttt{ConstrainedShortestPaths.jl} for a generic implementation of constrained shortest paths algorithms, and \texttt{PiecewiseLinearFunctions.jl} for working with piecewise linear functions and computing bounds efficiently.
    \item Numerical experiments on industrial instances from Air France shows that our column generation strongly outperforms compact MILP approaches, and that the diving heuristic is needed to retrieve good quality integer solutions. Solutions in a 0.3\% optimality gap are retrieved in a few hours on instances with up to 600 legs.
\end{enumerate}

\subsection{Outline}
In Section~\ref{sec:tail_assignment_OR:problem_setting}, we introduce our problem setting.
Section~\ref{sec:delay-model} describes our delay propagation model and the sample average approximation approach used to evaluate delay costs with given scenarios.
Then, Section~\ref{sec:column-generation} presents our optimization approach to solve the stochastic tail assignment problem: a column generation algorithm combined with a diving heuristic.
Numerical experiments on real-world Air France instances are presented in Section~\ref{sec:numerical-experiments}, before we conclude in Section~\ref{sec:conclusion}.

\section{Problem statement}\label{sec:tail_assignment_OR:problem_setting}
We now introduce the \emph{stochastic tail assignment problem}.
The goal is to build cost-minimal aircraft routes for a given schedule, such that all legs and maintenances are operated, and all operational constraints are satisfied.
We focus on a tactical problem setting, solved about one week before operations.
Once routes are built, they cannot be changed.
At operations time, in addition to usual operational costs, we observe flight delays that propagate to downstream flights along built aircraft routes.
These random delays cause additional costs, denoted as delay costs.
The objective is to minimize total operational and delay costs, in expectation.
While delays are unknown in advance, we assume knowledge about a scenario set of delays, and use them to minimize the empirical expectation.

\subsection{Activity schedule and fleet}
We consider a set of flight legs $\legSet$, to operate with a given aircraft fleet $\aircraftSet$.
Each aircraft $\aircraft\in\aircraftSet$ of the fleet, has a set of scheduled mandatory maintenances $\maintenanceSubset{\aircraft}$.
The set of all maintenances is denoted by $\maintenanceSet = \cup_{\aircraft\in\aircraftSet}\maintenanceSubset{\aircraft}$.
We call \emph{activity schedule} the set of all legs and maintenances, denoted by $\activitySchedule = \legSchedule \cup \maintenanceSet$.
Each activity $\activity\in\activitySchedule$ in the schedule has a known and fixed \emph{scheduled departure time} $\legScheduledDep{\activity}$ and \emph{scheduled arrival time} $\legScheduledArr{\activity}$, as well as departure and arrival airports, respectively denoted by $\airportDep{\activity}$ and $\airportArr{\activity}$.
Note that for maintenances, departure and arrival airports are identical: $\forall\maintenance\in\maintenanceSchedule,\, \airportDep{\maintenance} = \airportArr{\maintenance}.$

In addition to mandatory maintenances $\maintenanceSubset{\aircraft}$, each aircraft $\aircraft$ has a known \emph{first activity} to operate, which corresponds to its last known scheduled activity, and serves as a starting location.
This first activity is included in the schedule, and denoted by $\lastActivity{i}$.
Finally, we define a \emph{connection} $\connection = (\activity^-, \activity)$ as two successive activities that can be operated by the same aircraft.
We consider a set of \emph{mandatory connections} $\mandatoryConnections$ to operate.
In practice, this represents two legs that need to be operated successively by the same aircraft, for operational reasons.

\subsection{Connection graph}\label{subsec:event-graph}
A practical way to model an instance of the tail assignment problem is to use a directed acyclic digraph $\digraph = (\vertexSetBar, \arcSet)$, called \emph{connection graph}, in which vertices are activities, and arcs are valid connections.
The vertex set is defined as $\vertexSetBar = \activitySchedule\cup\{\source, \sink\}$, i.e. the activity schedule to which are added source and destination dummy vertices $s$ and $t$, respectively.
There is an arc $\arc = (\activity^-, \activity)$ between two activities $\activity^-$ and $\activity$ if and only if connecting airports are compatible ($\airportArr{\activity^-} = \airportDep{\activity}$), and scheduled time are separated by at least some known constant $\minimumTurnTime{\activity}$, called \emph{minimum turn time}: 
\begin{equation}
    \legScheduledArr{\activity^-} + \minimumTurnTime{\activity} \leq \legScheduledDep{\activity}.
\end{equation}
For every activity $\activity\in\activitySchedule$, there is a $(\source, \activity)$ arc and a $(\activity, \sink)$ arc.
Arc $(\source, \sink)$ is also in $\arcSet$.
By construction, this digraph is acyclic.
The arc set $\arcSet$ can be decomposed into a (non-disjoint) union of arc subsets denoted by $\arcSet^\aircraft$ for each aircraft $\aircraft\in\aircraftSet$.
Indeed, for a given aircraft $\aircraft$, many arcs can be removed from $\arcSet$ because they cannot be part of any feasible route for this aircraft.
Typically, for a specific aircraft $\aircraft$, all arcs that would allow skipping a scheduled maintenance of that aircraft, as well as all arcs connected to maintenances of other aircraft, can be safely removed from the arc subset $\arcSubSet{\aircraft}$.

\subsubsection{Feasible decisions}
Under this definition, a feasible aircraft route is an $\source-\sink$ path in $\digraph$, such that all mandatory maintenances and first activities are operated by their corresponding aircraft, i.e., an $\source-\sink$ path in the sub-graph $\disubgraph{\aircraft} = (\vertexSetBar, \arcSubSet{\aircraft})$.
We denote by $\feasibleRouteSet{\aircraft}$ the set of feasible routes for aircraft $\aircraft$.
A feasible solution of the full tail assignment problem is therefore a 
set of feasible routes (one route for each aircraft $\aircraft\in\aircraftSet$) partitioning all activities $\activitySchedule$.
We denote by $\feasibleDecisions{\instance}$ the set of feasible decisions $\decision$ for instance $\instance$.

\subsubsection{Preprocessing the graph}
The connection graph allows us to model the problem in a compact way, encoding most operational constraints directly through the graph structure.
We perform two main types of preprocessing to enforce additional constraints efficiently.

We precompute arc subsets $\arcSubSet{\aircraft}$ for each aircraft $\aircraft\in\aircraftSet$, which enables efficient enforcement of maintenance and first activity constraints.
For a given aircraft $\aircraft$, we can safely remove from $\arcSet$ all arcs that cannot be part of any feasible route for this aircraft.
Specifically, we remove all arcs that would allow skipping a scheduled maintenance of aircraft $\aircraft$, all arcs connected to maintenances of other aircraft, and arcs incompatible with the first activity constraint for aircraft $\aircraft$.

The connection graph can be further preprocessed to enforce mandatory connections constraints through graph sparsification.
A solution is feasible only if, for each mandatory connection arc $\arc\in\mandatoryConnections\subset\arcSet$, an aircraft route in the solution passes through this arc.
To enforce that arc $\arc = (\activity^-, \activity)$ appears in the solution, we remove from $\arcSet$ all outgoing arcs $(\activity^-, w)$ where $w \neq \activity$.
This forces any route containing activity $\activity^-$ to continue through the mandatory connection $\arc$.
With this preprocessing, the set partition constraint automatically enforces the mandatory connections constraint.

A graph traversal algorithm can identify and remove all such redundant arcs efficiently.
In the remainder of this paper, we assume that the connection graph has been preprocessed in this manner.

\subsection{Operational costs}
While aircraft routing is often a feasibility problem (all aircraft are considered identical), tail assignment is an optimization problem.
Individual aircraft can have different engines, yielding different fuel consumption and operational costs, even if they are of the same aircraft type.
For this reason, we denote  by $\legCost$ the \emph{leg cost} for operating leg $\leg\in\legSet$ with aircraft $\aircraft\in\aircraftSet$.
Maintenance costs do not need to be considered, because maintenances are always operated by the same aircraft in any feasible solution.
Additionally, we associate to each arc $\arc$ a \emph{connection cost} $\connectionCost$ when aircraft $\aircraft$ operates connection $\connection$.

The total \emph{operational cost} of a tail assignment is the sum of all leg and connection costs.
The total operational cost $\routeOperationalCost{\route}$ of a feasible route $\route\in\feasibleRouteSet{\aircraft}$ for aircraft $\aircraft$ is the sum of all leg and connection costs in the route:
\begin{equation}
    \routeOperationalCost{\route} = \sum_{\leg\in\route} \legCost[\leg] + \sum_{\arc\in\route} \connectionCost[\arc].
\end{equation}

\subsection{Delay costs}\label{subsec:delay-costs}
In addition to operational costs, we also want to consider and optimize the delay costs.
For this we introduce the delay random variables $\delay$.
The \emph{departure delay} and \emph{arrival delay} of activity $\activity\in\activitySchedule$ are respectively denoted by $\legDelayDep{\activity}$ and  $\legDelayArr{\activity}$.
Because delay propagates differently depending on aircraft routes considered, these random variables heavily depend on the tail assignment solution, and are therefore conditional distributions given a feasible decision $\decision\in\feasibleDecisions{\instance}$:
\begin{equation}
    \delay\sim p(\cdot|\decision).
\end{equation}

After observing delays, the total delay cost is computed using a delay cost function $\delayCostFunction{}\colon\bbR\to\bbR^+$.
We assume in the rest of the paper that this function is piecewise linear and non-decreasing.
$\delayCostFunction{}$ is entirely defined by its $J$ slopes denoted by $(\beta_0=0, \beta_1, \cdots, \beta_J)$, and its breakpoints, denoted by $(\alpha_1=0, \cdots, \alpha_J, \alpha_{J+1}=+\infty)$, such that:
\begin{equation}
    \left\{\begin{aligned}
        &\alpha_j < \alpha_{j+1},\, \forall j\in [J],\\
        &\beta_{j-1} < \beta_{j},\, \forall j\in [J],\\
        &\delayCostFunction{}(x) = 0, \forall x \leq 0,\\
        &\delayCostFunction{}(x) = \delayCostFunction{}(\alpha_j) + (x - \alpha_j)\beta_j, \qquad \forall j\in [J],\,\forall x\in [\alpha_j, \alpha_{j+1}].
    \end{aligned}\right.
\end{equation}
This piecewise linear structure reflects how airlines view the practical impact of flight delays in operations. The increasing slopes model the disproportionate costs that longer delays incur: short delays might cause minimal disruption, while longer delays trigger larger consequences such as missed connections, passenger compensation requirements, crew overtime, etc.
This structure incentivizes the optimization model to prioritize preventing high delays that are particularly costly to airline operations and passenger satisfaction.

\begin{remark}
    For the purposes of this study, we restrict our delay cost calculations to arrival delays of flight legs only, excluding maintenance activities. This limitation stems from data availability constraints rather than methodological restrictions. The mathematical frameworks and algorithms presented in this paper are readily extensible to incorporate both maintenance delay costs and departure delay penalties without structural modifications. Such extensions would be straightforward implementation considerations when the appropriate data becomes available.
\end{remark}

\subsection{Stochastic tail assignment}
The goal of the stochastic tail assignment problem is to find a feasible solution that minimizes the total operational cost and delay cost in expectation:
\begin{equation}\label{eq:stochastic-tail-assignment}
    \min_{\decision\in\feasibleDecisions{\instance}} \underbrace{\sum_{\route\in\decision}\routeOperationalCost{\route}}_{\text{operational cost}} + \underbrace{\bbE_{\delay\sim p(\cdot|\decision)}\left[\sum_{\leg\in\legSet} \delayCostFunction{}(\legDelayArr{\leg})\right]}_{\text{delay cost}}.
\end{equation}

\begin{remark}
    While this paper focuses on the Air France tail assignment variant of the stochastic aircraft routing problem, the presented problem formulations and algorithms can easily be adapted to other variants of the problem with additional or different operational constraints.
\end{remark}

\section{Delay propagation model and sample average approximation}\label{sec:delay-model}
We present in this section a simple delay propagation model to compute activity delays from exogenous scenarios, and approximate delay costs through a scenario-based sample-average approximation.

\subsection{Measuring delays}
We call \emph{event} the departure or arrival of a leg.
We recall that each leg $\leg$ in an activity schedule $\activitySchedule$ has a \emph{scheduled departure time} $\legScheduledDep{\leg}$ and \emph{scheduled arrival time} $\legScheduledArr{\leg}$.
These two values are fixed and known constants in our problem.
Additionally, we denote by $\legObservedDep{\leg}$ and $\legObservedArr{\leg}$ the \emph{observed departure time} and \emph{observed arrival time} of the leg, respectively.
These two values may differ from the scheduled times because of random delays occurring and propagating along the schedule.
They are random variables, whose values are known only after the corresponding event occurs.

\paragraph{}
We define the \emph{departure delay} $\legDelayDep{\leg}$ of a leg $\leg$ as the difference between its \emph{observed departure time} and its \emph{scheduled departure time}:
\begin{equation}\label{eq:departure-delay}
    \legDelayDep{\leg} = \legObservedDep{\leg} - \legScheduledDep{\leg}.
\end{equation}
Similarly, we define the \emph{arrival delay} $\legDelayArr{\leg}$ of a leg $\leg$ as the difference between its \emph{observed arrival time} and its \emph{scheduled arrival time}:
\begin{equation}\label{eq:arrival-delay}
    \legDelayArr{\leg} = \legObservedArr{\leg} - \legScheduledArr{\leg}.
\end{equation}
Note that when a leg takes-off or lands early, this translates into a corresponding negative delay value.

\subsection{Delay propagation equations}

There are two kinds of delay propagation: the propagation from the departure to the arrival of the same leg, called \emph{departure-arrival propagation}, and the propagation between two successive legs, called \emph{arrival-departure propagation}.
Both propagation mechanisms can be described by simple propagation equations which link departure and arrival delays along the schedule.

\subsubsection{Departure-arrival propagation}
Delay propagation between the departure and arrival of a leg $\leg$ is the simplest one.
The observed arrival time $\legObservedArr{\leg}$ is equal to the observed departure time $\legObservedDep{\leg}$ plus the observed leg duration, which corresponds its scheduled duration plus some random perturbation $\rootDelayArr{\leg}$ we call \emph{intrinsic arrival delay}:
\begin{equation}
    \legObservedArr{\leg} = \legObservedDep{\leg} + \overbrace{\underbrace{(\legScheduledArr{\leg} - \legScheduledDep{\leg})}_{\text{scheduled leg duration}} + \rootDelayArr{\leg}}^{\text{observed leg duration}} = \legScheduledArr{\leg} + \legDelayDep{\leg} + \rootDelayArr{\leg}.
\end{equation}
We therefore obtain the departure-arrival delay propagation equation along leg $\leg$:
\begin{equation}\label{eq:delay-model/departure-arrival-propagation}
    \legDelayArr{\leg} = \underbrace{\legDelayDep{\leg}}_{\text{propagated delay}} + \underbrace{\rootDelayArr{\leg}}_{\text{intrinsic delay}}.
\end{equation}
The total observed arrival delay can be decomposed into a sum of two terms, the propagated delay from the departure event, and the root delay generated during the flight itself.
Typically, bad weather conditions or technical issues may cause a leg (which would have been on-time otherwise) to be late, independently of its propagated delay.

\subsubsection{Arrival-departure propagation}
We define the \emph{scheduled turn time} $\scheduledTurnTime{\leg}$ of a leg $\leg$ as the time needed to be ready to take-off from the previous leg arrival time.
Airlines often refers to it as \emph{minimum turn time} because it is the usual turn time we can expect when there is no schedule perturbation.
However, in practice, this value is often overestimated and observed turn times can be lower than these reference values defined as ``minimum'' turn times.
For instance, Air France computes them rather as median values than minimum ones.
The turn time is \emph{leg dependent}, because it depends among others on the destination airport and aircraft type.
We also define the \emph{slack} $\slack{\leg^-}{\leg}$ between two consecutive legs $\leg^-$ and $\leg$ as the time margin between the scheduled departure time and the scheduled \emph{ready time} $\legScheduledArr{\leg^-} + \scheduledTurnTime{\leg}$:
\begin{equation}\label{eq:delay-model/slack}
    \slack{\leg^-}{\leg} = \legScheduledDep{\leg} - \legScheduledArr{\leg^-} - \scheduledTurnTime{\leg}.
\end{equation}
It can be interpreted as the maximum amount of delay which can be absorbed by the connection between legs $\leg^-$ and $\leg$ (see Figure~\ref{fig:arrival-departure-propagation}).

Once the observed arrival time $\legObservedArr{\leg^-}$ of leg $\leg^-$ is known, the scheduled departure time of the next leg $\leg$ is adjusted to $\max(\legScheduledDep{\leg}, \legObservedArr{\leg^-} + \scheduledTurnTime{\leg}$).
The observed departure time $\legObservedDep{\leg}$ of leg $\leg$ is equal to this adjusted departure time, plus the \emph{intrinsic departure delay}:
\begin{align}
    \legObservedDep{\leg} &= \overbrace{\max\left(\legScheduledDep{\leg}, \legObservedArr{\leg^-} + \scheduledTurnTime{\leg}\right)}^{\text{minimum time the leg can depart}} + \rootDelayDep{\leg},\\
    &= \max\left(\legScheduledDep{\leg}, \legDelayArr{\leg^-} + \legScheduledArr{\leg^-} + \scheduledTurnTime{\leg}\right) + \rootDelayDep{\leg}.
\end{align}
\begin{equation}\label{eq:delay-model/arrival-departure-propagation}
    \legDelayDep{\leg} = \rootDelayDep{\leg} + \underbrace{\max(\legDelayArr{\leg^-} - \slack{\leg^-}{\leg}, 0)}_{\text{propagated delay}}.
\end{equation}
Again, the total observed departure delay is the sum of the propagated delay from the previous legs in the aircraft route, and the intrinsic departure delay $\rootDelayDep{\leg}$, which can be interpreted as the random delay perturbation accumulated between the arrival of previous leg and the departure of considered leg.

\begin{remark}
    Note that in the case there is no preceding leg, there is no propagated delay, thus equation \eqref{eq:delay-model/arrival-departure-propagation} simplifies to:
    \begin{equation}
        \legDelayDep{\leg} = \rootDelayDep{\leg}.
    \end{equation}
\end{remark}

\begin{figure}
    \centering
    \begin{subfigure}[b]{0.75\textwidth}
        \centering
        \includegraphics[width=\textwidth]{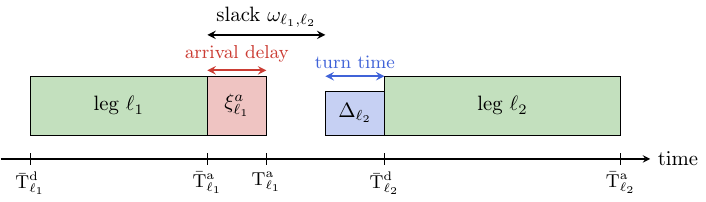}
        \caption{The arrival delay is lower than the slack, therefore the delay is fully absorbed and there is no propagated delay.}
        \label{fig:delay-prop-absorbed}
    \end{subfigure}
    
    \begin{subfigure}[b]{0.75\textwidth}
        \centering
        \includegraphics[width=\textwidth]{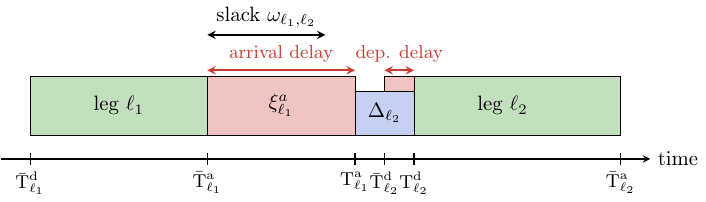}
        \caption{The arrival delay is higher than the slack, we observe in this case some propagated delay.}
        \label{fig:delay-prop-propagated}
    \end{subfigure}
    \caption{Arrival-departure propagation between two flight legs.}
    \label{fig:arrival-departure-propagation}
\end{figure}

\subsection{Sample average approximation and scenario based approach}\label{sec:delay-sample-average-approximation}
We approximate the expected delay cost in equation~\eqref{eq:stochastic-tail-assignment} by a \emph{sample average approximation} over a set of scenarios $\scenarioSet$:
\begin{equation}\label{eq:SAA}
    \bbE_{\delay\sim p(\cdot|\decision)}[\delayCostFunction{}(\delay)] \approx \frac{1}{|\scenarioSet|}\sum_{\scenario\in\scenarioSet} \delayCostFunction{}(\delay^\scenario).
\end{equation}

For each scenario $s\in\scenarioSet$, we assume to have access to intrinsic delay values $(\delayScenarioDep{\activity}, \delayScenarioArr{\activity})$ for each activity $\activity\in\activitySchedule$.
These precomputed root delays are routing-independent by construction, allowing them to be used as fixed parameters in the mathematical programming formulations. 
This enables direct computation of propagated leg delays $\delay_\leg^\scenario$ and associated costs within the optimization model.
For simpler notations in what follows, we denote by $\delayScenario{\activity}$ the sum of both departure and arrival intrinsic delays:
\begin{equation}
    \delayScenario{\activity} = \delayScenarioDep{\activity} + \delayScenarioArr{\activity}.
\end{equation}

\section{Column generation}\label{sec:column-generation}

In this section, we detail our column generation algorithm to solve the stochastic tail assignment problem.

\subsection{Set-partitioning formulation}

The stochastic tail assignment problem can be formulated as a mixed integer program using the following set-partitioning formulation:
\begin{subequations}\label{eq:column-generation}
    \begin{align}
        \min_y\quad & \sum_{\aircraft\in\aircraftSet}\sum_{\route\in\feasibleRouteSet{\aircraft}} c_{\route}^\aircraft y_\route^\aircraft\\
        \text{s.t.}\quad & \sum_{\aircraft\in\aircraftSet}\sum_{\route\ni\leg, \route\in\feasibleRouteSet{\aircraft}} y_\route^\aircraft = 1, & \forall\leg\in\legSet\label{eq:col1}\\
        & \sum_{\route\in\feasibleRouteSet{\aircraft}} y_\route^\aircraft = 1, & \forall\aircraft\in\aircraftSet\label{eq:col2}\\
        & y_\route^\aircraft\in\binarySet, & \forall\aircraft\in\aircraftSet,\,\forall\route\in\feasibleRouteSet{\aircraft}.
    \end{align}
\end{subequations}

Binary decision variables $y$ are indexed by aircraft $\aircraft\in\aircraftSet$ and feasible routes $\route\in\feasibleRouteSet{\aircraft}$ for aircraft $\aircraft$.
They are equal to 1 if route $\route$ is assigned to aircraft $\aircraft$, and 0 otherwise.
We denote by $c_\route^\aircraft$ the cost of a route $\route$ for aircraft $\aircraft$, i.e., its total operational cost plus the expected delay cost of the route, that can be computed using the sample average approximation of Section~\ref{sec:delay-sample-average-approximation}.
Constraint~\eqref{eq:col1} ensures that all legs are operated exactly once, while constraint~\eqref{eq:col2} ensures that each aircraft is used at most once.
The number of routes being exponentially large, the same is true for the number of variables.

\subsection{Column generation for the linear relaxation}
A common way to solve the linear relaxation of \eqref{eq:column-generation} is to use a column generation algorithm~\parencite{desaulniersColumnGeneration2006,uchoaOptimizingColumnGeneration2024}, with a constrained shortest path problem as pricing sub-problem.
In our case, column costs being computed using non-linear propagation equations makes solving the pricing sub-problem challenging.

Column generation can easily be implemented in practice with row generation applied to its dual formulation.
We denote by $(\lambda_\leg)_{\leg\in\legSet}$ dual variables associated with constraint~\eqref{eq:col1} and by $(\mu_\aircraft)_{\aircraft\in\aircraftSet}$ dual variables associated with constraint~\eqref{eq:col2}.
The dual of the linear relaxation of \eqref{eq:column-generation} is then:
\begin{subequations}\label{eq:dual}
    \begin{align}
        \max_{\lambda,\mu}\quad & \sum_{\leg\in\legSet}\lambda_\leg + \sum_{\aircraft\in\aircraftSet}\mu_\aircraft \\
        \text{s.t.}\quad & c_\route^\aircraft - \sum_{\leg\in\route}\lambda_\leg - \mu_\aircraft \geq 0 & \forall\aircraft\in\aircraftSet,\,\forall\route\in\feasibleRouteSet{\aircraft} \\
        & \lambda_\leg \in\bbR & \forall\leg\in\legSet\\
        & \mu_\aircraft\in\bbR & \forall\aircraft\in\aircraftSet
    \end{align}
\end{subequations}
The full column generation algorithm is detailed in Algorithm~1.

\begin{algorithm}[ht]
    \caption{Column generation algorithm}\label{algo:column-generation}
    \KwData{An instance $\instance$, a scenario set $\scenarioSet$, and for each aircraft $\aircraft\in\aircraftSet$ a set of feasible routes $\feasibleRouteSubSet{\aircraft}\subset\feasibleRouteSet{\aircraft}$.}
    \While{true}{
        Solve~\eqref{eq:dual} restricted to routes in $\feasibleRouteSubSet{\aircraft}$ for each aircraft $\aircraft$\;
        Denote by $c^{\text{low}}$ its optimal value\;

        \For{$\aircraft\in\aircraftSet$}{
            Find a route $\route$ with minimal reduced cost $\hat c_\route^\aircraft = c_\route^\aircraft - \sum_{\leg\in\route}\lambda_\leg + \mu_\aircraft$ is minimal (pricing sub-problem, see Section~\ref{subsec:pricing-sub-problem})\;
            \If{$\hat c_\route^\aircraft \geq 0$}{
                add $\route$ to $\feasibleRouteSubSet{\aircraft}$\;
            }
        }
        \If{No route was added to any route subset}{
            break\;
        }
    }
    \KwResult{Final route subsets $\feasibleRouteSubSet{\aircraft}$, dual variable values $\lambda$, $\mu$, and relaxation value $c^{\text{low}}$}
\end{algorithm}

It takes as input the instance $\instance$, the set of scenarios $\scenarioSet$, and an initial set of route $(\feasibleRouteSubSet{\aircraft})_{\aircraft\in\aircraftSet}$.
An easy way to initialize the route subsets $(\feasibleRouteSubSet{\aircraft})_{\aircraft\in\aircraftSet}$ is to find a feasible tail assignment solution, and initialize each of the route subsets with the corresponding routes found in the deterministic solution.
To find such a solution, we can for instance solve the deterministic version (see \eqref{eq:deterministic-MILP} in Appendix~\ref{appendix:compact-mip-formulation}), or run it with no objective.

\subsection{Pricing subproblem}\label{subsec:pricing-sub-problem}
In order to solve the pricing problem in Algorithm~1, we need to solve the following sub-problem for each aircraft $\aircraft$:
\begin{equation}\label{eq:pricing-sub-problem}
    \min_{\route\in\feasibleRouteSet{\aircraft}} c_\route^\aircraft - \sum_{\leg\in\route}\lambda_\leg - \mu_\aircraft
\end{equation}
It can be solved by modeling it as a  resource constrained shortest path problem on the connection sub-graph $\disubgraph{\aircraft}$.
We use a bidirectional enumeration algorithm~\parencite{irnichShortestPathProblems2005}, using dominance on forward paths and bounds on  backward paths to discard partial route solutions.
Our algorithm to solve~\eqref{eq:pricing-sub-problem} for a fixed aircraft $\aircraft$ is given by Algorithm~2.

\SetKw{KwInit}{Initialization:}
\begin{algorithm}[ht]
    \caption{Enumeration algorithm with bounding for the pricing sub-problem}\label{algo:constrained-shortest-path}
    \KwData{An acyclic digraph $\digraph = (\vertexSetBar, \arcSet)$, forward resource extension functions $\forwardExtensionFunction{\arc}$ for each arc $\arc\in\arcSet$, and bounds $b_\vertex$ for each vertex $\vertex\in\vertexSetBar$.}
    \KwInit{$\route^\star \leftarrow nothing$ and $c^\star\leftarrow+\infty$.
    $L\leftarrow\{(\source)\}$.
    $M_\source \leftarrow \{\forwardResource{\source}\}$
    $\forall \vertex\neq\source,\,M_\vertex = \emptyset$}\;
    \While{$L\neq\emptyset$}{
        $\partialPath\leftarrow$ $\source-\vertex$ path in $L$ of minimum $\bidirectionalCost(\forwardResource{\partialPath}, b_\vertex)$\;
        \For{$w\in\outneighbors{\vertex}$}{
            $\arc \leftarrow (v, w)$\;
            $\partialPath' \leftarrow \partialPath + \arc$\;
            $\forwardResource{\partialPath'}\leftarrow\forwardExtensionFunction{\arc}(\forwardResource{\partialPath})$\;
            \eIf{$w = \sink$ and $C_{\partialPath'}<c^\star$}{
                $c^\star\leftarrow C_{\partialPath'}$\;
                $\route^\star\leftarrow \partialPath'$\;}{
                \If{$\forwardResource{\partialPath'}$ is not dominated by a path in $M_w$ and $\bidirectionalCost(\forwardResource{\partialPath'}, b_w) < c^\star$}{
                    Add $\forwardResource{\partialPath'}$ to $M_w$\;
                    Remove all dominated paths from $M_w$\;
                    Add $\partialPath'$ to $L$\;
                }
            }
        }
    }
    \KwResult{Optimal route $\route^\star$ of minimum cost $c^\star$}
\end{algorithm}
For this algorithm to be valid, we need to define a partially ordered set $\forwardResourceSet$ of \emph{forward resources} defined on the graph vertices and store the state of a partial forward path starting at $\source$ ending on the considered vertex.
This set being partially ordered, we can use this order to discard partial paths that are dominated by other partial paths during the enumeration algorithm.
We also need \emph{forward extension functions} $\forwardExtensionFunction{\arc}$ defined on arcs $\arc$ that are able to extend a partial path by updating its forward resource after adding this new arc to it.
Additionally, we defined a \emph{backward resource} set $\backwardResourceSet$ and \emph{backward extension functions} $\backwardExtensionFunction{\arc}$ that are useful to encode and extend partial backward paths starting at a considered vertex $\vertex$ and ending at $\sink$.
In addition to dominance for forward paths, backward paths can be used to precompute backward resource bounds $\backwardBound{\vertex}$ at each vertex $\vertex$, which allows computing cost bounds for any given partial forward path using a bidirectional cost function $\bidirectionalCost$.
A partial path $\forwardResource{\vertex}$ can then be discarded using bounds if the cost lower bound  $\bidirectionalCost(\forwardResource{\vertex}, \backwardBound{\vertex})$ is not good enough with respect to the best known solution cost.

The algorithm maintains a list $L$ of partial paths, and a set $M_\vertex$ of non-dominated forward resources associated to partial $\source-\vertex$ paths.
At each iteration, it selects the partial path $\partialPath$ in $L$ of minimum bidirectional cost, and extends it to all its out-neighbors.
If the extension is feasible and not dominated by a path in $M_w$, we can use the precomputed bounds to discard it if its lower bound cost is greater than the current best solution.
Then, if the extension was not discarded by the bounding, we check if it is dominated by a path in $M_w$.
If it is not, we found a new non-dominated promising path, and add it to $M_w$ and $L$.
This combination of cutting using dominance and bounding allows for efficient enumeration of the paths.

What is new in this algorithm is the way bounds are computed on backward paths.
Indeed, we use the framework from~\cite{parmentierAlgorithmsNonLinearStochastic2017} and compute bound on backward resources that are convex piecewise linear functions.
In what follows, we detail the definitions forward and backward resource sets, the forward extension functions, and the bidirectional cost function used in Algorithm~2, and how to compute bounds efficiently on backward resources.

\begin{remark}
    Maintenance constraints being directly encoded into connection sub-graphs $(\disubgraph{\aircraft})_{\aircraft\in\aircraftSet}$, we do not need to check for feasibility of partial paths, as all partial paths are feasible by construction.
    This greatly simplifies the algorithm and makes it more efficient.
    Additionally, maintenances make the sub-graphs quite sparse, which can greatly accelerate the enumeration algorithm in practice.
\end{remark}

\subsubsection{Forward resources}
We define the \emph{forward resource set} as $\forwardResourceSet = \bbR^{|\scenarioSet|+1}$.
A forward resource $\forwardResource{\partialPath} = (\partialReducedCost{\partialPath}, \propagatedDelay{\partialPath})$ at vertex $\vertex$ associated to partial route $\partialPath$ as a path from $\source$ to $\vertex$: $\partialPath = (\source, \cdots, \vertex)$.
It contains the following information:
\begin{itemize}
    \item $\partialReducedCost{\partialPath}\in\bbR$ is the accumulated reduced cost of the partial route, i.e. the operational cost plus delay cost of the partial route minus the sum of the dual variables of all legs in $\partialPath$: $\partialReducedCost{\partialPath} = c_\partialPath^\aircraft - \sum_{\leg\in\partialPath} \lambda_\leg$
    \item $\propagatedDelay{\partialPath}$ is a vector of size $|\scenarioSet|$, containing the propagated delay along partial route $\partialPath$ before the activity corresponding to vertex $\vertex$ for each scenario $\scenario\in\scenarioSet$.
\end{itemize}
For each arc $\arc = (\vertex, \vertexAlt)$ in the connection graph, we define a \emph{forward extension function} $\forwardExtensionFunction{\arc}\colon \forwardResourceSet\to\forwardResourceSet$.
As we work on the sub-graph $\disubgraph{\aircraft}$, the extended partial path $\partialPath' = \partialPath + \arc = (\source, \cdots, \activity, \vertexAlt)$ is always feasible:
The extended path resource is equal to:

\begin{equation}
    \forwardExtensionFunction{\arc}(\forwardResource{\partialPath}) = 
    \left(\begin{array}{l}
        \partialReducedCost{\partialPathAlt} = \partialReducedCost{\partialPath} + \legCost[\vertex] + \connectionCost + \frac{1}{|\scenarioSet|}\sum_{\scenario\in\scenarioSet} \delayCostFunction{}\big(\delayScenario{\activity} + \propagatedDelay{\partialPath}\big) - \lambda_\vertex\oneindicator_{\{\vertex\in\legSet\}}\\[0.6ex] 
        \propagatedDelay{\partialPathAlt} = [\max\big(\delayScenario{\activity} + \propagatedDelay{\partialPath} - \slack{\vertex}{\vertexAlt}, 0\big)]_{\scenario\in\scenarioSet}.
    \end{array}\right)
\end{equation}

The arrival delay of activity $\vertex$ being equal to the propagated delay plus the root delays of the activity ($\legDelayArr{\vertex} = \delayScenario{\activity} + \propagatedDelay{\partialPath}$), the reduced cost is extended by adding to it the operational cost of activity $\vertex$, the connection cost of arc $\arc$, the mean delay costs $\delayCostFunction{}(\legDelayArr{\vertex})$ over scenarios, and subtracting the dual variable of the vertex $\lambda_\vertex$ (if it is a leg).
Then, we update the propagated delay in each scenario by subtracting the slack (see equation~\eqref{eq:delay-model/arrival-departure-propagation}), and update the maintenance index if the vertex is a maintenance activity.
Following this definition, we therefore have $\forwardResource{\partialPath'} = \forwardExtensionFunction{\arc}(\forwardResource{\partialPath})$.
The forward resource at the empty partial path is defined by $\{\source\}$ is $\forwardResource{\source} = (0, (0)_{\scenario\in\scenarioSet}, 1)$.

Additionally, we define a partial order $\preceq$ on the forward resource set.
Then, for any two forward resources $\forwardResource{\partialPath_1}$ and $\forwardResource{\partialPath_2}$ associated to two partial paths $\partialPath_1$ and $\partialPath_2$ ending at the same vertex $\activity$, we have:
\begin{equation}
    \forwardResource{\partialPath_1}\preceq\forwardResource{\partialPath_2} \text{ if and only if }
    \begin{cases}
        \forall\scenario\in\scenarioSet,\,\propagatedDelay{\partialPath_1,\scenario}\leq\propagatedDelay{\partialPath_2,\scenario}\\
        \partialReducedCost{\partialPath_1} \leq \partialReducedCost{\partialPath_2}
    \end{cases}
\end{equation}
We say that an $\source-\vertex$ path $\partialPath_1$ \emph{dominates} another $\source-\vertex$ path $\partialPath_2$ if $\forwardResource{\partialPath_1}\preceq\forwardResource{\partialPath_2}$.
For the enumeration algorithm to be able to use dominance to discard paths, we need to ensure that the forward extension function is non-decreasing with respect to this partial order $\preceq$. 
It is the case by definition of those extension functions.
Under this assumption, the following proposition shows dominated paths can safely be pruned during the enumeration algorithm.
\begin{proposition}
    Let $\route = \partialPath_1 + \partialPath_2$ an optimal solution of~\eqref{eq:pricing-sub-problem}, with $\partialPath_1$ an $\source-\vertex$ path and $\partialPath_2$ a $\vertex-\sink$ path.
    If $\partialPath'_1$ is an $\source-\vertex$ path such that $\forwardResource{\partialPath'_1}\preceq\forwardResource{\partialPath_1}$, then $\route' = \partialPath'_1 + \partialPath_2$ is an optimal solution of~\eqref{eq:pricing-sub-problem}.
\end{proposition}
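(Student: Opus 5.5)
The plan is to show that the total cost of a route is a non-decreasing function of the forward resource at the split vertex $\vertex$. Then replacing $\partialPath_1$ by a dominating $\partialPath'_1$ cannot increase the cost of the completed route.

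First, I will check that $\route'$ is feasible. Since $\partialPath'_1$ ends at $\vertex$ and $\partialPath_2$ starts at $\vertex$, the concatenation is an $\source-\sink$ path in $\disubgraph{\aircraft}$. Every $\source-\sink$ path of this sub-graph belongs to $\feasibleRouteSet{\aircraft}$, because maintenance and first-activity constraints are encoded in the graph by preprocessing. So $\route'\in\feasibleRouteSet{\aircraft}$. One caveat: if partitioning requires elementarity, acyclicity of $\digraph$ guarantees the concatenation is still a simple path.

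Second, I will write the reduced cost as a composition. Let $\partialPath_2=(\vertex=v_0,v_1,\dots,v_k=\sink)$ with arcs $\arc_i=(v_{i-1},v_i)$. By induction on the number of arcs, the forward resource at $\sink$ of any path $Q+\partialPath_2$, with $Q$ an $\source-\vertex$ path, is
\begin{equation*}
F_{\arc_k}\circ\cdots\circ F_{\arc_1}(\forwardResource{Q}).
\end{equation*}
This holds because the extension of a resource depends only on the resource itself and the arc. The reduced cost $\hat c$ of the full route is the first component of this resource minus $\mu_\aircraft$, a constant. Hence the objective of~\eqref{eq:pricing-sub-problem} at $Q+\partialPath_2$ equals $\phi(\forwardResource{Q})$, where $\phi$ is the cost coordinate of $F_{\arc_k}\circ\cdots\circ F_{\arc_1}$ shifted by $-\mu_\aircraft$.

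Third, I will establish monotonicity, which is the main point to verify carefully. I need each $F_\arc$ to be non-decreasing for $\preceq$. Take $r_1\preceq r_2$, so each delay coordinate and the cost coordinate of $r_1$ are at most those of $r_2$. For every scenario:
\begin{itemize}
\item the new propagated delay $\max(\delayScenario{\vertex}+\propagatedDelay{\cdot}-\slack{\vertex}{\vertexAlt},0)$ is non-decreasing in $\propagatedDelay{\cdot}$, since $x\mapsto\max(x-c,0)$ is non-decreasing;
\item the cost increment $\frac{1}{|\scenarioSet|}\sum_\scenario\delayCostFunction{}(\delayScenario{\vertex}+\propagatedDelay{\cdot})$ is non-decreasing, since $\delayCostFunction{}$ is non-decreasing by assumption;
\item the other terms ($\legCost[\vertex]$, $\connectionCost$, $\lambda_\vertex$) do not depend on the resource.
\end{itemize}
Adding these to the cost coordinate gives $F_\arc(r_1)\preceq F_\arc(r_2)$. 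A composition of non-decreasing maps is non-decreasing, and projecting onto the cost coordinate preserves the inequality. Therefore $\phi(\forwardResource{\partialPath'_1})\le\phi(\forwardResource{\partialPath_1})$.

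Finally, this gives $\hat c_{\route'}\le\hat c_{\route}$. Since $\route$ is optimal and $\route'$ is feasible, the reverse inequality also holds, so $\route'$ is optimal. The only delicate step is the precise identification of $\hat c_\route$ with the cost coordinate of the composed extension. This includes the bookkeeping at the last arc into $\sink$: the final activity's leg cost and delay cost are added when leaving it, and $\sink$ itself contributes nothing. I will handle this by noting that the same extension formula applies on $(v_{k-1},\sink)$, with the slack irrelevant there.
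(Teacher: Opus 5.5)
Your proof is correct and follows the same route as the paper's: both use that the forward extension functions are non-decreasing for $\preceq$, so composing them along $\partialPath_2$ gives $\forwardResource{\route'}\preceq\forwardResource{\route}$ and hence a reduced cost no larger than that of the optimal $\route$. You supply details the paper leaves implicit, namely the explicit monotonicity check via $\delayCostFunction{}$ and $x\mapsto\max(x-c,0)$, and feasibility of $\route'$ obtained directly from the sub-graph encoding rather than from dominance, but the argument is the same.
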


\begin{proof}{Proof}
    $\partialPath_1$ is feasible and $\forwardResource{\partialPath'_1}\preceq\forwardResource{\partialPath_1}$, therefore $\partialPath'_1$ is feasible.
    Then, resource extension functions being non-decreasing, we have $\forwardResource{\route'}\preceq\forwardResource{\route}$, therefore $\route'$ is also feasible and of reduced cost non-greater than $\route$: $C_{\route'}\leq C_\route$.
\end{proof}

\subsubsection{Backward resources}
We denote the \emph{backward resource set} by $\backwardResourceSet$.
A \emph{backward resource} $\backwardResource{\partialPath} = (\,\partialBackwardReducedCost{\partialPath}, \backwardFunction{\partialPath}, \lambda_\partialPath)\in\backwardResourceSet$ is associated to a partial $\vertex-\sink$ path.
It contains the following information:
\begin{itemize}
    \item $\partialBackwardReducedCost{\partialPath}\in\bbR$ is the accumulated operational cost of the partial path.
    \item $\backwardFunction{\partialPath}$ is a vector containing $|\scenarioSet|$ piecewise linear functions of size, one for each scenario $\scenario\in\scenarioSet$.
    Each component $\backwardFunction{\partialPath}^\scenario\colon\bbR\to\bbR$, takes as input the propagated delay $\propagatedDelay{\vertex}$ before vertex $\vertex$ and returns the total delay cost of the partial path $\partialPath$ for scenario $\scenario$.
    \item $\lambda_\partialPath$ is the accumulated dual variables summed over the legs in the partial route: $\lambda_\partialPath = \sum_{\leg\in\partialPath\cap\legSet}\lambda_\leg$.
\end{itemize}
For each arc $\arc = (\vertexu, \vertex)$ in the connection graph, we define the corresponding \emph{backward extension function} $\backwardExtensionFunction{\arc}\colon\backwardResourceSet\to\backwardResourceSet$.
We extend a given backward path as follows:
\begin{equation}
    \backwardExtensionFunction{\arc}(\,\backwardResource{\partialPath}) = 
    \left(\begin{aligned}
        &\partialBackwardReducedCost{\partialPathAlt} = \partialBackwardReducedCost{\partialPath} + \legCost[\vertex] + \connectionCost\\
        &\forall\scenario\in\scenarioSet,\, \backwardFunction{\partialPathAlt}^\scenario = \delayCostFunction{} \circ \arrivalDelayFunction{\arc} + \backwardFunction{\partialPath}^\scenario \circ \delayPropagationFunction{\arc}\\
        &\lambda_{\partialPathAlt} = \lambda_{\partialPath} + \lambda_w
    \end{aligned}\right),
\end{equation}
with $\arrivalDelayFunction{\arc}: x\mapsto \delayScenario{\vertexu} + x$ the arrival delay function, and $\delayPropagationFunction{\arc}: x\mapsto \delayScenario{\vertexu} + \max(x - \slack{\vertexu}{\vertex}, 0)$ the delay propagation function, along arc $\arc$.
The backward operational cost is easily extended by adding the operational cost of vertex $\vertex$ and the connection cost of arc $\arc$.
The dual variable sum is extended similarly.
Extending the delay cost function is more complex.
It is defined as a sum of two functions: a function that computes the delay cost of vertex $\vertex$, and a function that computes the delay cost of the rest of the path.
The first term if computed by composing the delay cost function $\delayCostFunction{}$ with the arrival delay function that computes the arrival delay of $\vertexu$ from the propagated delay before $\vertexu$, while the second term is computed by composing the delay cost function with the delay propagation function that computes the propagated delay before vertex $\vertex$ from the propagated delay before $\vertexu$.
Note that, after extension, components of the resulting vector $\backwardFunction{\partialPathAlt}$ are convex and piecewise linear functions as a sum and composition of convex piecewise linear functions.
Similarly to the forward resources, this definition therefore gives us $\backwardResource{\partialPathAlt} = \backwardExtensionFunction{\arc}(\,\backwardResource{\partialPath})$.
The backward resource at the sink $\sink$ is defined as $\backwardResource{\sink} = (0, (x\mapsto 0)_{\scenario\in\scenarioSet}, 0)$.

We can also define a partial order $\preceq$ on the backward resource set.
For any two forward resources $\backwardResource{\partialPath_1}$ and $\backwardResource{\partialPath_2}$ associated to two partial paths $\partialPath_1$ and $\partialPath_2$ starting at the same vertex $\vertex$, we have:
\begin{equation}
    \begin{aligned}
        &\backwardResource{\partialPath_1}\preceq\backwardResource{\partialPath_2} \text{ if and only if: }\\
        &\begin{cases}
            \forall\scenario\in\scenarioSet,\forall x\in\bbR,\, \backwardFunction{\partialPath_1}^\scenario(x) \leq \backwardFunction{\partialPath_2}^\scenario(x) \\
            \partialBackwardReducedCost{\partialPath_1} \leq \partialBackwardReducedCost{\partialPath_2}\\
            \lambda_{\partialPath_1} \geq \lambda_{\partialPath_2}
        \end{cases}
    \end{aligned}
\end{equation}
We define the meet $\wedge$ over $\backwardResourceSet$ between two backward resources $\backwardResource{\partialPath}$ and $\backwardResource{\partialPathAlt}$ as the greatest lower bound:
\begin{equation}
    \backwardResource{\partialPath}\wedge\backwardResource{\partialPathAlt} = 
    \left(\begin{aligned}
        &\forall\scenario\in\scenarioSet,\, \backwardFunction{\partialPath}^\scenario \wedge\backwardFunction{\partialPathAlt}^\scenario\\
        &\min(\,\cev C_{\partialPath}, \,\cev C_{\partialPathAlt})\\
        &\max(\lambda_{\partialPath}, \lambda_{\partialPathAlt})
    \end{aligned}\right)
\end{equation}
with $f\wedge g$ for $f$ and $g$ piecewise linear function being another piecewise linear function which is a lower bound of $f$ and $g$.
This meet operation can be used to define the generalized dynamic programming equation:
\begin{equation}
    b_\vertex = \begin{cases}
        ((0\mapsto 0)_{\scenario\in\scenarioSet}, 0, 0) &\text{ if } \vertex = \sink\\
        \bigwedge\limits_{\arc = (\vertex, w)\in\outneighbors{\vertex}} \backwardExtensionFunction{\arc}(b_w) &\text{ otherwise}
    \end{cases}
\end{equation}
\cite{parmentierAlgorithmsNonLinearStochastic2017} gives that this generalized dynamic programming equation gives a lower bound on the backward resources:
\begin{proposition}
    $b_\vertex\preceq\,\backwardResource{\partialPath}$ for any vertex $\vertex$ and all $\vertex-\sink$ paths $\partialPath$.
\end{proposition}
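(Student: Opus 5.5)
The plan is to argue by induction on the vertices in reverse topological order of the acyclic graph $\digraph$, exactly as in the generalized dynamic programming framework of \cite{parmentierAlgorithmsNonLinearStochastic2017}. The induction hypothesis at a vertex $w$ is that $b_w\preceq\backwardResource{\partialPath}$ for every $w-\sink$ path $\partialPath$.

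The base case is $\vertex=\sink$. The only $\sink-\sink$ path is the trivial one, and its resource $\backwardResource{\sink}$ coincides with $b_\sink$, so $b_\sink\preceq\backwardResource{\sink}$ by reflexivity.

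For the inductive step, take $\vertex\neq\sink$ and any $\vertex-\sink$ path $\partialPath$. Write $\partialPath=\arc+\partialPath'$, where $\arc=(\vertex,w)$ is its first arc and $\partialPath'$ is a $w-\sink$ path. By construction $\backwardResource{\partialPath}=\backwardExtensionFunction{\arc}(\backwardResource{\partialPath'})$. Since $w$ comes after $\vertex$ in topological order, the induction hypothesis gives $b_w\preceq\backwardResource{\partialPath'}$. The proof then needs two facts.
\begin{enumerate}
\item \emph{Monotonicity of the backward extension function:} $\backwardExtensionFunction{\arc}$ is non-decreasing for $\preceq$.
\item \emph{Meet property:} the meet is a lower bound, so $b_\vertex=\bigwedge_{\arc'}\backwardExtensionFunction{\arc'}(b_{w'})\preceq\backwardExtensionFunction{\arc}(b_w)$.
\end{enumerate}
Chaining them gives $b_\vertex\preceq\backwardExtensionFunction{\arc}(b_w)\preceq\backwardExtensionFunction{\arc}(\backwardResource{\partialPath'})=\backwardResource{\partialPath}$, using transitivity of $\preceq$.

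The monotonicity check is componentwise and mostly routine.
\begin{itemize}
\item \emph{Cost component:} adding the constant $\legCost[\vertex]+\connectionCost$ preserves $\leq$.
\item \emph{Dual component:} adding $\lambda_w$ preserves $\geq$. This matters because the order is reversed on this coordinate.
\item \emph{Function components:} if $f^\scenario\leq g^\scenario$ pointwise, then $f^\scenario\circ\delayPropagationFunction{\arc}\leq g^\scenario\circ\delayPropagationFunction{\arc}$ pointwise, since precomposition by any map preserves pointwise order. Adding the common term $\delayCostFunction{}\circ\arrivalDelayFunction{\arc}$ keeps the inequality.
\end{itemize}
Precomposition needs no monotonicity of $\delayPropagationFunction{\arc}$, so this part is immediate.

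The meet property requires that, for piecewise linear $f$ and $g$, the function $f\wedge g$ satisfies $f\wedge g\leq f$ and $f\wedge g\leq g$ pointwise. Together with $\min$ on costs and $\max$ on duals, this makes the resource meet a lower bound of each argument for $\preceq$. Iterating over the finitely many out-arcs, the meet over $\outneighbors{\vertex}$ is below each term.

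The main obstacle is the meet on functions. Only its lower-bound property is used here, and the statement only requires $f\wedge g$ to be "a lower bound of $f$ and $g$". If $f\wedge g$ is instead taken to be a convex piecewise linear lower envelope, as the lattice ordering on convex functions suggests, then one must check that this operation really lies below both arguments pointwise. For example, the convex hull of $\min(f,g)$ does. Whether it is the exact greatest lower bound is irrelevant for this proposition.
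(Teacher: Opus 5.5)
Your proof is correct and follows essentially the same route as the paper. The paper gives no argument of its own and simply cites \cite{parmentierAlgorithmsNonLinearStochastic2017}; the generalized dynamic programming bound there rests on exactly your induction in reverse topological order, combining the fact that the meet is a lower bound with the fact that the backward extension functions are non-decreasing. Your componentwise monotonicity check (including the reversed order on the dual coordinate) and your observation that only the lower-bound property of the function meet is used, not convexity or exactness, spell out hypotheses the paper leaves implicit.
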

\noindent This lower bound can easily be computed using a Ford-Bellman algorithm on the acyclic connection digraph.

\begin{remark}
    For a given aircraft $\aircraft$, the only thing that changes in the pricing sub-problem~\eqref{eq:pricing-sub-problem} between two column generation iterations are the dual variables $\lambda_\leg$ and $\mu_\aircraft$.
    Therefore, in practice, most components of the bounds can be precomputed before the column generation algorithm starts, and only the component related to the dual variables need to be recomputed at each iteration.
    In particular, the piecewise linear functions of the backward resources can be precomputed once for all before the column generation algorithm starts, which saves a lot of computation time.
\end{remark}

\subsubsection{Bidirectional cost function}
We define the bidirectional cost function $\bidirectionalCost:\forwardResourceSet\times\backwardResourceSet\to\bbR\cup\{+\infty\}$.

If two paths, $\partialPath$ and $\partialPathAlt$, are such that $\partialPath$ is a $\source-\vertex$ path and $\partialPathAlt$ is a $\vertex-\sink$ path for a vertex $\vertex$, the bidirectional cost of two associated forward and backward resources $\forwardResource{\partialPath}$ and $\backwardResource{\partialPathAlt}$ is defined as:
\begin{equation}
    \bidirectionalCost(\forwardResource{\partialPath},\,\backwardResource{\partialPathAlt}) = C_{\partialPath} + \cev C_{\partialPathAlt} + \dfrac{1}{|\scenarioSet|}\sum\limits_{\scenario\in\scenarioSet} \backwardFunction{\partialPathAlt}^\scenario(\propagatedDelay{\partialPath, \scenario}) - \lambda_{\partialPathAlt},
\end{equation}
else we set it to $+\infty$.

\subsubsection{Computing the meet of two convex piecewise linear functions}
The quality of the backward bound $\backwardBound{\vertex}$ depends on how we compute the meet of two piecewise linear functions.
The best possible lower is taking the minimum between the two functions:
\begin{equation}
    \backwardFunction{\partialPath}^\scenario \wedge\backwardFunction{\partialPathAlt}^\scenario = \min(\backwardFunction{\partialPath}^\scenario, \backwardFunction{\partialPathAlt}^\scenario)
\end{equation}
However, while this is a very good bound, in practice, on large instances, this min operation makes the number of breakpoints of resulting piecewise linear function explode.
This results in a very slow algorithm to compute the bounds.
To mitigate this issue, we recommend using a slightly worse bound by taking the convex lower bound instead of the minimum.
Indeed, if both functions are convex, we can keep the convexity of the resulting function by taking the best convex lower bound, which will not increase the number of breakpoints.
This bound is very efficient to compute while giving a good enough lower bound to prune partial paths in the enumeration algorithm using it.

\subsection{Diving heuristic}
While the column generation algorithm efficiently computes tight lower bounds for the linear relaxation of~\eqref{eq:column-generation}, obtaining high-quality integer solutions requires additional techniques. As we demonstrate in Section~\ref{sec:numerical-experiments}, using a restricted master heuristic (solving the integer program on the generated columns) is often insufficient for finding good solutions to the stochastic tail assignment problem.
To address this limitation, we implement the \emph{diving heuristic with limited backtracking} algorithm from \cite{sadykovPrimalHeuristicsBranch2019}.

The diving heuristic performs a depth-first exploration of the branch-and-price tree. At each node, we apply the column generation algorithm to obtain the optimal linear relaxation. Then, we select the route variable $y_\route^\aircraft$ with the largest fractional value and fix it to 1, effectively assigning route $\route$ to aircraft $\aircraft$.
Unlike greedy diving approaches that permanently fix variables, our implementation uses limited backtracking when a node becomes infeasible.
Specifically, if fixing a variable to 1 leads to an infeasible subproblem (no feasible integer solution exists), we backtrack to the previous node and try fixing an alternative variable with the next-largest fractional value.

This process continues until we find a feasible integer solution or exhaust the search tree within our computational limits.
To prevent cycling and ensure termination, we maintain a taboo list of route variables that have been previously fixed and led to infeasibility. The algorithm terminates when this taboo list exceeds a predefined size parameter $k\in\bbN$.
Following \cite{sadykovPrimalHeuristicsBranch2019}, we also impose a $maxDepth$ parameter that limits the maximum depth of the search tree.
In our case, we set $maxDepth = |\aircraftSet|$, since any feasible solution will contain exactly $|\aircraftSet|$ routes (one for each aircraft).

The diving approach is particularly effective for our stochastic tail assignment problem for several reasons.
First, it progressively reduces the problem size by fixing variables, which accelerates the column generation at deeper nodes.
Second, it can quickly find high-quality solutions even for instances where a restricted master heuristic struggles to converge due to the large number of generated columns.
Finally, at each step of the diving process, we can efficiently prune the connection graph by removing the aircraft associated with the fixed route and deleting all activities already covered by fixed routes, further reducing computational complexity.

\section{Numerical experiments}\label{sec:numerical-experiments}
In this section, our goal is to show that the stochastic approach indeed leads to better solution than the traditional deterministic approach, and is tractable in practice.
To that purpose, we benchmark extensively our column generation and diving heuristic on industrial instances from Air France, and show their performance in terms of quality of the solution returned, computing time, and scalability.

\subsection{Tail assignment instances}\label{subsec:air-france-datasets}

We have access to a set of tail assignment instances provided by Air France.
They are past instances from the end of 2022.
It contains 129 instances of various sizes, with a time horizon between 4 and 10 days, containing between 60 and 600 scheduled flight legs, between 6 and 55 scheduled maintenances.
An instance contains between 4 and 20 aircraft of the same aircraft type, part of one of 10 different fleets.
The associated connection graph sizes range from 1000 to 50000 arcs.
We split the instances into 3 categories based on the number of legs:
\begin{itemize}
    \item \textbf{Small instances}: 60 to 200 legs, 49 instances.
    \item \textbf{Medium instances}: 201 to 400 legs, 62 instances.
    \item \textbf{Large instances}: more than 400 legs, 18 instances.
\end{itemize}

Fuel costs and connection costs are provided for each leg and connection, as well as scheduled turn times.
The delay cost functions are also provided for each aircraft type, i.e., for each instance.
Figure~\ref{fig:tail-assignment-instance} shows an example of a feasible solution for one of the smallest instances in the dataset.

\begin{figure}[ht]
    \centering
    \includegraphics[width=0.75\textwidth]{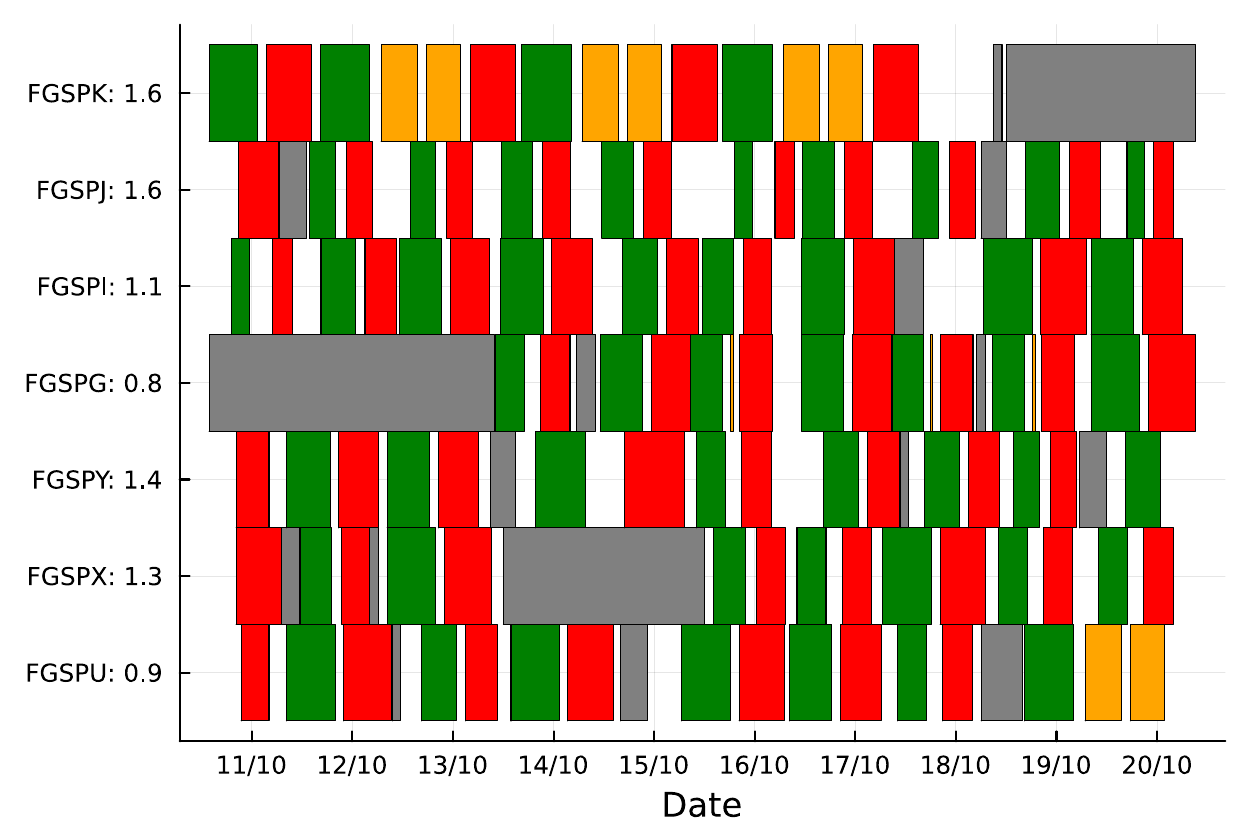}
    \caption{Example of a tail assignment instance and a feasible solution. This instance has 115 legs, 17 maintenances, and 7 aircraft. Each row correspond to an aircraft route, each box corresponding to an activity. Grey boxes are maintenances, colored boxes are flight legs. Green legs are legs that depart from the Charles de Gaulle airport, red legs are legs that arrive at the Charles de Gaulle airport, and yellow legs are legs that are not connected to the Charles de Gaulle airport.}
    \label{fig:tail-assignment-instance}
\end{figure}

\begin{remark}
    It is important to note that the instance sizes in our dataset, while ranging from 60 to 600 flight legs, are representative of real-world operational practice.
    Airlines typically solve the tail assignment problem at the sub-fleet level rather than for their entire fleet simultaneously.
\end{remark}

\subsection{Benchmark algorithms}\label{subsec:benchmark-algorithms}

To evaluate the effectiveness of our proposed column generation and diving heuristic approach, we compare against the three following benchmark algorithms.

\subsubsection{Compact MIP formulation}
We implement a compact mixed-integer programming formulation that models the stochastic tail assignment problem directly using binary variables for each aircraft-leg assignment and scenario-dependent delay propagation constraints.
This formulation serves as our primary baseline to demonstrate the necessity of the column generation approach.
It is detailed in Appendix~\ref{appendix:compact-mip-formulation}.
While this approach provides exact solutions when solved to optimality, it faces significant scalability challenges as both instance size and the number of scenarios increase, as we demonstrate in our computational results.

\subsubsection{Deterministic baseline}
As an additional baseline, we solve a deterministic version of the tail assignment problem that minimizes only operational costs, completely ignoring delay considerations.
This deterministic approach represents the traditional methodology used in many airline planning systems and allows us to quantify the value of incorporating stochastic delay information.
The deterministic model can be solved efficiently using standard MIP solvers, providing a fast but potentially suboptimal solution that serves as our baseline for cost comparisons.
It is also detailed in Appendix~\ref{appendix:compact-mip-formulation}.

This approach is the closest to what would be used in practice by airlines, the main difference being that we do not enforce turn time constraints in the deterministic model, for feasibility reasons.
Indeed, to avoid high delay costs, airlines typically enforce minimum time buffers between legs, these often leading to infeasibility and needing to be fixed by hand.
To maintain tractable benchmark instances, we deliberately omit these turn time constraints.
Consequently, our deterministic baseline achieves unrealistically low operational costs compared to practical implementations, while potentially incurring substantial delay costs due to tight scheduling.
This baseline thus serves two analytical purposes: quantifying the operational value of incorporating delay uncertainty in tail assignment, and providing a controlled comparison against traditional deterministic optimization approaches without manual solution corrections.

\subsubsection{Restricted master heuristic}
Finally, we implement a restricted master heuristic (see Appendix~\ref{appendix:restricted-master-heuristic} for details).
This approach represents a natural heuristic extension of column generation and serves to demonstrate the necessity of the more sophisticated diving heuristic.
As our results show, this approach often struggles to find high-quality integer solutions due to the limited column set and the complex structure of the stochastic problem.

\subsection{Experiments design}
On each of the 129 Air France instances, we benchmark our column generation algorithm and diving heuristic against the three baselines described above.
The optimization solution framework is implemented in the Julia programming language.
The pricing sub-problem is solved using a bidirectional enumeration algorithm implemented in our open-source package \texttt{ConstrainedShortestPaths.jl}\footnote{See \url{https://github.com/BatyLeo/ConstrainedShortestPaths.jl}}.
The piecewise linear delay cost functions and the associated bound computations are handled using the \texttt{PiecewiseLinearFunctions.jl}\footnote{See \url{https://github.com/BatyLeo/PiecewiseLinearFunctions.jl}} package, which we also developed.
While these generic building blocks are open-source, the Air France specific implementation code and the industrial datasets remain unfortunately private for confidentiality reasons.

For all experiments, we use Gurobi 12.0~\parencite{gurobioptimizationllcGurobiOptimizerReference2024} with default settings.
We set a time limit of 3600 seconds per instance for the MIP formulation and the restricted master heuristic.
Each instance is solved for $1$, $5$, $10$, $50$, and $100$ scenarios.
We use the solution of the deterministic MILP as a warm start for the MIP, and as the initial column set in the column generation algorithm.
The diving heuristic is parametrized with a taboo list size of $k=15$.

\subsection{Results}

\subsubsection{MIP formulation}

In Table~\ref{tab:MIP-performance}, we report the performance of the compact MIP formulation on the Air France instances.
\begin{table}[ht]
    \centering
    \begin{tabular}{lccc}
        \hline
        \textbf{Number of scenarios} & \textbf{Average time} & \multicolumn{1}{p{3.1cm}}{\centering \textbf{Instances solved} \\\textbf{to optimality}} & \textbf{Average gap} \\
        \hline
        \textbf{Deterministic (0 scenarios)} & 5s & 129/129 & 0\% \\
        \quad Small instances & 0.9s & 49/49 & 0\% \\
        \quad Medium instances & 5s & 62/62 & 0\% \\
        \quad Large instances & 16s & 18/18 & 0\% \\
        \hline
        \textbf{1 scenario} & 1814s & 79/129 & 11\% \\
        \quad Small instances & 100s & 48/49 & 0\% \\
        \quad Medium instances & 2636s & 31/62 & 3\% \\
        \quad Large instances & 3600s & 0/18 & 70\% \\
        \hline
        \textbf{5 scenarios} & 2346s & 53/129 & 50\% \\
        \quad Small instances & 431s & 48/49 & 0.2\% \\
        \quad Medium instances & 3495s & 5/62 & 71\% \\
        \quad Large instances & 3600s & 0/18 & 111\% \\
        \hline
        \textbf{10 scenarios} & 2545s & 51/129 & 64\% \\
        \quad Small instances & 882s & 48/49 & 1.3\% \\
        \quad Medium instances & 3553s & 4/62 & 99\% \\
        \quad Large instances & 3600s & 0/18 & 111\% \\
        \hline
    \end{tabular}
    \caption{Performance of the compact MIP formulation on the Air France instances}
    \label{tab:MIP-performance}
\end{table}
We observe that incorporating delay considerations significantly increases computational complexity.
The deterministic version, which only minimizes operational costs (which are linear), can be solved in just 5 seconds on average across all 129 instances, with even the largest instances solved within 16 seconds.

However, when we introduce just a single delay scenario, computational complexity increases.
The average solve time jumps to over 30 minutes, and only 79 of 129 instances can be solved to optimality within the one-hour time limit.
This effect is particularly pronounced for larger instances: while nearly all small instances are solved optimally with one scenario, none of the large instances reach optimality, resulting in an average gap of 70\%.

As we increase the number of scenarios, the solver's performance deteriorates.
With 5 scenarios, less than half instances are solved to optimality, and the average gap increases to 50\%.
The pattern continues with 10 scenarios, with an average gap of 64\%.

The breakdown by instance size reveals that small instances remain relatively tractable even with multiple scenarios, while medium and large instances face severe scalability issues.
On medium instances, the corresponding average gaps increase dramatically from 3\% with one scenario to 99\% with 10 scenarios.

Figure~\ref{fig:relax-gaps} further illustrates how solution quality deteriorates with problem size.
The gap to the lower bound begins increasing for all methods around 115 legs and grows exponentially for instances with more than 300 legs, even with just a single scenario.
\begin{figure}[htbp]
    \centering
    \includegraphics[width=0.7\textwidth]{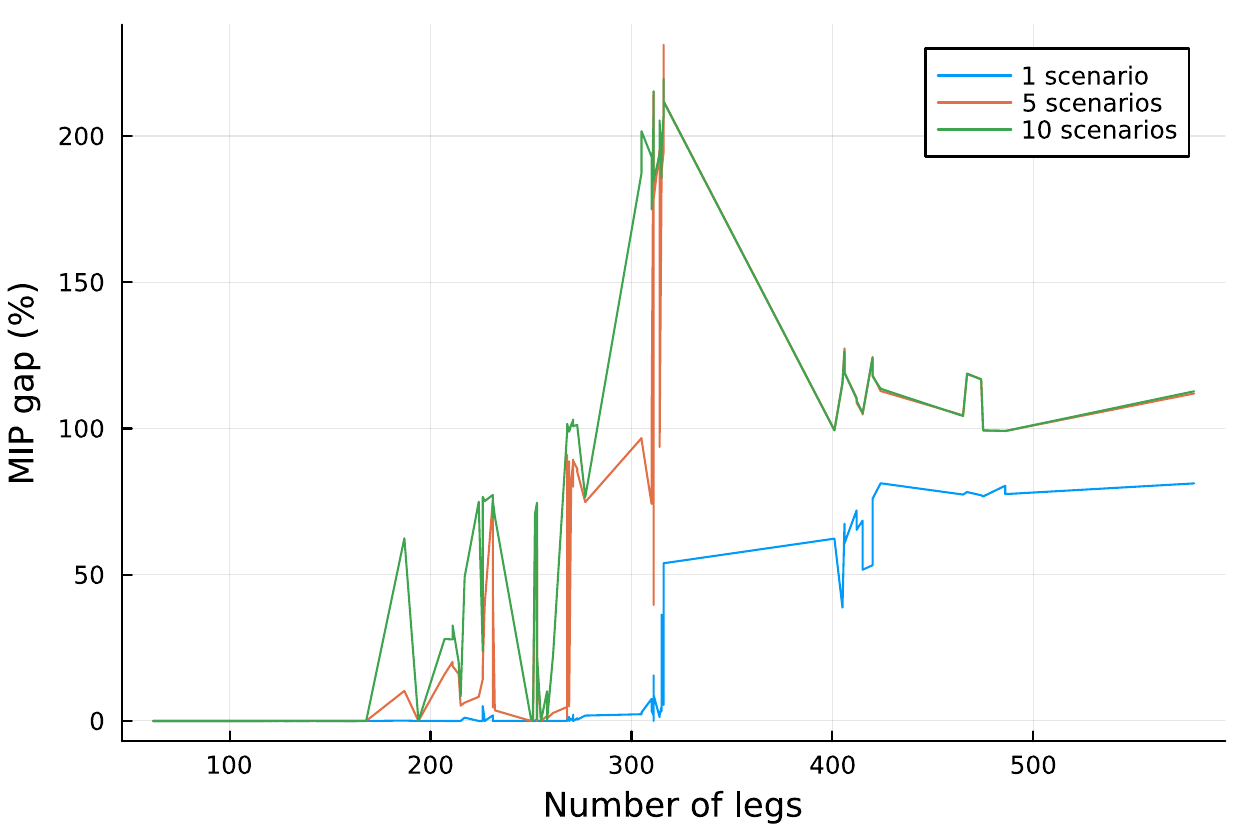}
    \caption{MIP gap values with respect to the number of legs in the instances. \label{fig:relax-gaps}}
\end{figure}

\begin{result}
    The MIP formulation faces significant scalability challenges as both the number of legs and scenarios increase.
    While effective for small instances across all scenario counts, it struggles with medium and large instances, where the average gap to the lower bound increases substantially and the proportion of instances solved to optimality quickly decreases.
\end{result}

\subsubsection{Set-partitionning formulation}

We report in Table~\ref{tab:colgen-performance} the performance of the set-partitionning formulation on the Air France instances.
We report the average and maximum time to solve the relaxation with column generation, as well as the average and maximum time and gaps of both the restricted master heuristic and diving heuristic.
Gaps are computed with respect to the lower bound obtained by the column generation algorithm.

\begin{table}[ht]
    \centering
    \begin{tabular}{clccccc}
        \hline
        \multirow{2}{*}{\textbf{Scenarios}} & \multirow{2}{*}{\textbf{Instances}} & \multicolumn{1}{c}{\textbf{Col. gen.}} & \multicolumn{2}{c}{\textbf{Restricted master}} & \multicolumn{2}{c}{\textbf{Diving heuristic}} \\
        \cline{3-7}
        & & \textbf{Time (s)} & \textbf{Time (s)} & \textbf{Gap (\%)} & \textbf{Time (s)} & \textbf{Gap (\%)} \\
        \hline
        \multirow{4}{*}{\textbf{1}} & \textbf{All} & 578 (10102) & 379 (3600) & 74 (1800) & 870 (48837) & 0.13 (8.6) \\
        & Small & 12 (175) & 1 (11) & 104 (1800) & 25 (808) & 0.19 (8.6) \\
        & Medium & 395 (1796) & 261 (3600) & 40 (631) & 332 (1855) & 0.05 (1.2) \\
        & Large & 2751 (10102) & 1812 (3600) & 110 (1656) & 5018 (48837) & 0.19 (1.7) \\
        \hline
        \multirow{4}{*}{\textbf{5}} & \textbf{All} & 667 (8725) & 654 (3600) & 137 (1485) & 1381 (81433) & 0.17 (5.1) \\
        & Small & 15 (236) & 1 (3) & 132 (89) & 20 (296) & 0.09 (3.7) \\
        & Medium & 405 (2150) & 798 (3600) & 111 (1485) & 317 (1827) & 0.2 (5.1) \\
        & Large & 3348 (8725) & 1936 (3600) & 240 (1385) & 8753 (81433) & 0.28 (2.9) \\
        \hline
        \multirow{4}{*}{\textbf{10}} & \textbf{All} & 915 (14138) & 780 (3600) & 147 (2022) & 818 (15611) & 0.42 (30) \\
        & Small & 18 (231) & 1 (4) & 140 (2022) & 26 (292) & 0.13 (5.2) \\
        & Medium & 609 (4018) & 865 (3600) & 138 (1569) & 404 (1417) & 0.69 (30) \\
        & Large & 4407 (14138) & 2608 (3600) & 168 (1245) & 4395 (15611) & 0.3 (3.1) \\
        \hline
        \multirow{4}{*}{\textbf{50}} & \textbf{All} & 1596 (23095) & 638 (3600) & 143 (2279) & 1338 (21331) & 0.21 (5.6) \\
        & Small & 35 (503) & 1 (4) & 177 (1373) & 31 (235) & 0.09 (2.9) \\
        & Medium & 833 (3855) & 918 (3600) & 127 (2279) & 676 (2381) & 0.22 (5.6) \\
        & Large & 8472 (23095) & 1409 (3600) & 108 (1164) & 7174 (21331) & 0.49 (4.4) \\
        \hline
        \multirow{4}{*}{\textbf{100}} & \textbf{All} & 2738 (43081) & 568 (3600) & 117 (1950) & 2061 (33371) & 0.28 (6.8) \\
        & Small & 56 (762) & 1 (3) & 94 (203) & 55 (482) & 0.21 (5.7) \\
        & Medium & 1518 (11560) & 772 (3600) & 132 (1950) & 1078 (8667) & 0.28 (6.8) \\
        & Large & 14242 (43081) & 1410 (3600) & 125 (1723) & 10908 (33371) & 0.47 (4.7) \\
        \hline
        \multicolumn{7}{p{\textwidth}}{\footnotesize Average value is first provided, then maximum in parenthesis. E.g., the average time for the column generation with 1 scenario on all instances is 578s and the maximum 10102s.}
    \end{tabular}
    \caption{Performance of the set-partitioning formulation on the Air France instances. Each cell contains the average and maximum values on the dataset, broken down by instance size.}
    \label{tab:colgen-performance}
\end{table}

Looking at the performance data in Table~\ref{tab:colgen-performance}, we can draw several conclusions about the different solution approaches.

The set-partitioning formulation demonstrates superior scalability compared to the compact MIP formulation, allowing it to effectively handle instances with larger numbers of legs and more scenarios.
The column generation algorithm efficiently computes lower bounds for all 129 instances across the entire range of scenarios, whereas the MIP approach struggles even with smaller scenario sets.

When examining solution times, we observe significant variation by instance size.
For column generation with a single scenario, solving small instances takes on average just 12 seconds, medium instances about 7 minutes, while large instances require an average of 45 minutes.
As expected, these times increase with the number of scenarios, reaching an average of nearly 4 hours for large instances with 100 scenarios.

The restricted master heuristic shows reasonable average solve times (6-13 minutes across all scenarios), but it consistently hits the 1-hour time limit for larger instances.
This suggests the heuristic struggles to find high-quality integer solutions within the allocated time for complex problems.

In comparison, the diving heuristic demonstrates excellent solution quality, achieving average gaps of less than 0.5\% across all scenario configurations, with the largest gap for any instance being 30\%.
This suggests that the progressive variable-fixing approach employed by the diving heuristic is particularly well-suited to this problem structure, and also confirms that the column generation algorithm is effective at providing tight lower bounds across all instances.

However, this superior solution quality comes at the cost of increased computational time compared to the restricted master heuristic, particularly for challenging instances.
The maximum solve time for the diving heuristic can exceed 9 hours for instances with 100 scenarios.

\begin{result}
    The set-partitioning formulation, demonstrates better scalability.
    The column generation algorithm efficiently computes tight lower bounds, for all instances and for more scenarios than the MIP formulation.
    However, the restricted master heuristic exhibits high gaps for some instances, which leads to high gap in average.
    The diving heuristic, on the other hand, achieves near-optimal solutions with an average gap of less than 0.5\%, at the cost of increased computational time.
\end{result}

\subsubsection{Comparison to deterministic approach}

Finally, we compare the performance of our stochastic algorithms to the deterministic approach.
We compute the average improvement of the stochastic formulations over the deterministic solution.
For this, for each solution, we compute the operational cost, the delay cost, and the total cost, with 100 delay scenarios.
We report the average improvement over the deterministic solution in Table~\ref{tab:deterministic-improvement}.
\begin{table}[ht]
    \centering
    \begin{tabular}{ccccccc}
        \hline
        \multicolumn{2}{c}{\textbf{Number of scenarios}} & \textbf{1} & \textbf{5} & \textbf{10} & \textbf{50} & \textbf{100} \\
        \hline
        \multirow{9}{*}{\textbf{Restricted master heuristic}} & \textbf{Operational cost} & 15.1\% & 14\% & 15.4\% & 16\% & 16\% \\ 
        & \quad Small & 5.1\% & 5.6\% & 5.2\% & 5.5\% & 5.5\% \\
        & \quad Medium & 26\% & 24\% & 27\% & 27\% & 27\% \\
        & \quad Large & 4.8\% & 5.6\% & 4.5\% & 7.5\% & 6.4\% \\
        \cline{2-7}
        & \textbf{Delay cost} & -66\% & -63\% & -58\% & -61\% & -64\% \\ 
        & \quad Small & -61\% & -64\% & -62\% & -62\% & -65\% \\
        & \quad Medium & -75\% & -66\% & -64.\% & -62\% & -65\% \\
        & \quad Large & -48\% & -48\% & -30\% & -58\% & -58\% \\
        \cline{2-7}
        & \textbf{Total cost} & -51\% & -48\% & -44\% & -46\% & -49\% \\
        & \quad Small & -39\% & -41\% & -39\% & -39\% & -41\% \\
        & \quad Medium & -64\% & -55\% & -53\% & -51\% & -54\% \\
        & \quad Large & -41\% & -42\% & -25\% & -51\% & -50\% \\
        \hline
        \multirow{9}{*}{\textbf{Diving heuristic}} & \textbf{Operational cost} & 19\% & 21\% & 21\% & 21\% & 21\% \\ 
        & \quad Small & 5.1\% & 5.6\% & 5.8\% & 5.8\% & 5.6\% \\
        & \quad Medium & 33\% & 36\% & 37\% & 37\% & 37\% \\
        & \quad Large & 6.6\% & 7.4\% & 7.9\% & 7.9\% & 7.9\% \\
        \cline{2-7}
        & \textbf{Delay cost} & -77\% & -79\% & -79\% & -79\% & -79\% \\ 
        & \quad Small & -67\% & -69\% & -69\% & -70\% & -70\% \\
        & \quad Medium & -82\% & -83\% & -84\% & -84\% & -84\% \\
        & \quad Large & -86\% & -88\% & -89\% & -89\% & -89\% \\
        \cline{2-7}
        & \textbf{Total cost} & -60\% & -62\% & -62\% & -62\% & -62\% \\
        & \quad Small & -43\% & -44\% & -44\% & -45\% & -45\% \\
        & \quad Medium & -70\% & -71\% & -71\% & -71\% & -71\% \\
        & \quad Large & -75\% & -77\% & -77\% & -77\% & -77\% \\
        \hline
    \end{tabular}
    \caption{Average gap to the deterministic solution broken down by instance size.}
    \label{tab:deterministic-improvement}
\end{table}

Table \ref{tab:deterministic-improvement} presents a clear trade-off between operational costs and delay costs when comparing stochastic approaches to the deterministic baseline.
Both heuristics show increased operational costs (between 14-21\%) while achieving substantial reductions in delay costs (between 59-80\%).
As mentioned in Section~\ref{subsec:benchmark-algorithms}, the deterministic approach does not enforce turn time constraints, which is why we observe operational costs increase and huge delay costs improvements.
This demonstrates how stochastic optimization effectively balances these competing objectives, compared to a method that does not consider delay uncertainty at all.

The diving heuristic consistently outperforms the restricted master approach across all scenario configurations, achieving approximately 62\% total cost reduction versus 45-51\% for the restricted master.

Interestingly, solution quality remains relatively stable as the number of scenarios increases beyond 5, suggesting that even a few well-chosen scenarios can effectively capture delay uncertainty.

These results confirm that explicitly modeling delay uncertainty leads to substantially better outcomes than purely operational cost-focused approaches, with the diving heuristic providing the most effective balance between computational tractability and solution quality.

\begin{result}
    Stochastic optimization significantly outperforms purely operational cost focused approaches. Both heuristics show an effective trade-off between increased operational costs and greatly reduced delay costs, resulting in substantial total cost reductions of up to 62\%. The diving heuristic consistently delivers the best solutions, reducing total costs by approximately 62\% compared to the deterministic baseline.
\end{result}

\section{Conclusion}\label{sec:conclusion}
In this paper, we presented an approach to solve the stochastic tail assignment problem for optimizing operational and delay costs.
We proposed two mathematical programming formulations: a compact mixed-integer program and a set-partitioning formulation.
To solve the latter, we developed a tailored column generation algorithm where the pricing sub-problem is solved using the open-source Julia packages \texttt{ConstrainedShortestPaths.jl} and \texttt{PiecewiseLinearFunctions.jl} that we developed.
This solution framework can be combined with either a restricted master heuristic or a diving heuristic to find good solutions.

Our numerical experiments on real-world Air France instances demonstrated the effectiveness of our method.
The column generation algorithm provided tight lower bounds across all scenarios, while the compact MIP formulation struggled to scale on larger instances.
The diving heuristic consistently produced near-optimal solutions with an average gap of less than 0.5\%, significantly outperforming both the restricted master heuristic and traditional deterministic approaches.

Most significantly, our approach reduced total expected costs by up to 62\% compared to deterministic approaches, demonstrating that explicitly modeling delay uncertainty leads to substantial operational improvements.
This reduction represents a meaningful balance between increased operational costs and greatly decreased delay costs, a trade-off that traditional deterministic approaches cannot achieve.

On larger instances, while our methods still provide better quality solutions than a pure deterministic solver, they start having difficulty scaling, and can take a long time to solve.
Future work could focus on improving the scalability of the proposed methods through algorithmic and data-driven enhancements.
Developing more sophisticated delay propagation models by incorporating historical flight data would also be valuable.
Finally, investigating integration with real-time decision support systems for day-of-operations adjustments represents another promising direction.

From a practical perspective, these results suggest that airlines can significantly improve their resilience to delays while maintaining operational efficiency by adopting stochastic optimization approaches.
The demonstrated cost reductions could translate to improved on-time performance, reduced passenger compensation expenses, and greater schedule reliability in real-world airline operations.

\section*{Acknowledgements}
We would like to express our gratitude to Air France for providing the real-world data that made this research possible. We are particularly thankful to the Operations Research team at Air France for their valuable insights, domain expertise, and feedback throughout this project. Their operational knowledge was instrumental in developing algorithms that address practical challenges in airline operations.

We also thank Merve Bodur and Patrick Jaillet for their helpful comments and suggestions as reviewers of my thesis, which helped improve the quality of this work.

\clearpage
\appendix
\part*{Appendices}
\addcontentsline{toc}{part}{Appendices}
\section{Compact MIP formulation}\label{appendix:compact-mip-formulation}

The compact MIP formulation is commonly used for deterministic aircraft routing problems, with additional constraints to compute and propagate delays along the routing decision variables.
The difficult part in the stochastic case is to formulate the delay cost function in a way that can be given to a MILP solver.
Indeed, the delay cost function being piecewise linear and delay propagation equations having non-linearity, all these need to be linearized.

\subsection{Routing decision variables}
Using the connection graph $\digraph = (\vertexSetBar, \arcSet)$ and its aircraft specific sub-graphs $\disubgraph{\aircraft} = (\vertexSetBar, \arcSubSet{\aircraft})_{\aircraft\in\aircraftSet}$ introduced in the problem setting in subsection~\ref{subsec:event-graph}, we introduce binary decision variables $\decisioni\in\binarySet$ for each aircraft $\aircraft\in\aircraftSet$ and arc $\arc\in\arcSet^\aircraft$.
This variable is equal to $1$ if and only if aircraft $\aircraft$ performs connection $\arc$.

\subsection{Deterministic formulation}
The deterministic version of the tail assignment problem that only minimizes operational costs can be formulated as flow formulation on the connection graph $\digraph$, as detailed in equation~\eqref{eq:deterministic-MILP}.

\begin{subequations}\label{eq:deterministic-MILP}
    \begin{align}
        \min_\solution\, & \sum_{\leg\in\legSet}\sum_{\aircraft\in\aircraftSet}\sum_{\arc\in\outneighbors{\leg}\cap\arcSubSet{\aircraft}}\legCost\decisioni + \sum_{\aircraft\in\aircraftSet}\sum_{\arc\in\arcSubSet{\aircraft}} \connectionCost\decisioni \label{eq:operational-cost}\\
        \text{s.t.}\, & \sum_{\arc\in\inneighbors{\vertex}\cap\arcSubSet{\aircraft}}\decisioni = \sum_{\arc\in\outneighbors{\vertex}\cap\arcSubSet{\aircraft}} \decisioni,\,\forall\vertex\in\activitySchedule,\,\forall\aircraft\in\aircraftSet\label{eq:route-constraint-1}\\
        & \sum_{\arc\in\outneighbors{s}\cap\arcSubSet{\aircraft}}\decisioni = 1,\quad\forall\aircraft\in\aircraftSet\label{eq:route-constraint-2}\\
        & \sum_{\arc\in\inneighbors{t}\cap\arcSubSet{\aircraft}}\decisioni = 1, \quad\forall\aircraft\in\aircraftSet\label{eq:route-constraint-3}\\& \sum_{\aircraft\in\aircraftSet}\sum_{\arc\in\inneighbors{\leg}\cap\arcSubSet{\aircraft}}\decisioni = 1, \quad\forall\leg\in\legSet\label{eq:set-partitionning}\\        
        & \decisioni\in\binarySet,\quad\forall\aircraft\in\aircraftSet,\quad\forall\arc\in\arcSubSet{\aircraft}.
    \end{align}
\end{subequations}

Indeed, most operational constraints described in Section~\ref{sec:tail_assignment_OR:problem_setting} are already implicitly encoded in the connection graph, and do not need to be explicitly formulated.
The only remaining constraints to formulate are routing and set-partitionning ones.
\emph{Route constraints} can be formulated as $\source$-$\sink$ flow constraints on the event graph, as shown in equations~\eqref{eq:route-constraint-1} to~\eqref{eq:route-constraint-3}.
Similarly, we can enforce that all legs are operated exactly once by formulating \emph{set-partitioning} constraints as in equation~\eqref{eq:set-partitionning}.

\begin{remark}
    Note that set-partitioning constraints are only indexed by legs $\leg\in\legSet$, and maintenances do not appear.
    This is because maintenance constraints are already enforced in the connection graph through preprocessing of the sub-graphs $(\disubgraph{\aircraft})_{\aircraft\in\aircraftSet}$.
\end{remark}

\subsection{Delay cost}
To extend the deterministic model to the stochastic tail assignment problem, we need to tackle the non-linear delay costs \eqref{eq:SAA}.
To do so, we introduce additional decision variables $\delayVariable{\activity}$, measuring the arrival delay of each activity $\activity$ under scenario $\scenario\in\scenarioSet$.

For all activity $\activity\in\activitySchedule$ and scenrio $\scenario\in\scenarioSet$, delay propagation equations~\eqref{eq:delay-model/arrival-departure-propagation} and~\eqref{eq:delay-model/departure-arrival-propagation} translate into:
\begin{equation}\label{eq:delay-propagation-nonlinear}
    \delayVariable{\activity} = \delayScenario{\activity} + \sum_{\aircraft\in\aircraftSet}\sum_{\activity^-\in\inneighbors{\activity}\cap\arcSubSet{\aircraft}} \decisioni[\activity^-,\activity][\aircraft](\delayVariable{\activity^-} - \slack{\activity^-}{\activity})^+,
\end{equation}
where $(a)^+ = max(a, 0)$.
The $\max$ operator can be eliminated using two inequalities:
\begin{subequations}\label{eq:delay-constraints}
    \begin{align}
        &\delayVariable{\activity} \geq \delayScenario{\activity} + \sum_{\aircraft\in\aircraftSet}\sum_{\activity^-\in\inneighbors{\activity}\cap\arcSubSet{\aircraft}} \decisioni[(\activity^-,\activity)][\aircraft](\delayVariable{\activity^-} - \slack{\activity^-}{\activity})\label{eq:non-linear-delay-constraint}\\ 
        &\delayVariable{\activity} \geq \delayScenario{\activity}
    \end{align}
\end{subequations}
Constraints~\eqref{eq:non-linear-delay-constraint} being quadratic, the full MIP can either be solved using a solver that can handle quadratic constraints such as Gurobi~\parencite{gurobioptimizationllcGurobiOptimizerReference2024} and SCIP~\parencite{bolusaniSCIPOptimizationSuite2024}, or linearized by hand into a MILP using McCormick inequalities~\parencite{mccormickComputabilityGlobalSolutions1976}.

We recall that the sample average approximation of the delay cost objective can then be written as:
\begin{equation}\label{eq:delay-cost}
    \frac{1}{|S|}\sum_{\scenario\in\scenarioSet}\sum_{\leg\in\legSet} \delayCostFunction{}(\delayVariable{\leg})
\end{equation}
The delay cost function $\delayCostFunction{}$ being non-linear (see subsection~\ref{subsec:delay-costs}) but piecewise linear non-decreasing, it also needs to be linearized.
To do so, we introduce a set of variables for each combination of scenario, activity and slope interval:
\begin{equation}\label{eq:delay-cost-variables}
    \delta_{\activity, \scenario, j}\in\bbR, \quad\forall j\in \intInterval{0}{J},\,\forall \activity\in\activitySchedule,\,\forall\scenario\in\scenarioSet,
\end{equation}
allowing us to express the delay cost as:
\begin{equation}\label{eq:delay-cost-2}
    \frac{1}{|S|}\sum_{\scenario\in\scenarioSet}\sum_{\activity\in\activitySchedule} \sum_{j=1}^J \beta_j \delta_{\activity, \scenario, j},
\end{equation}
subject to the following linearization constraints:
\begin{subequations}\label{eq:delay-cost-linearization}
    \begin{align}
        \delta_{\activity, \scenario, 0} &\leq 0,&\forall\activity\in\activitySchedule,\,\forall\scenario\in\scenarioSet \\
        \delta_{\activity, \scenario, j} &\geq 0,&\forall\activity\in\activitySchedule,\,\forall\scenario\in\scenarioSet,\,\forall j\in [J] \\
        \delta_{\activity, \scenario, j} &\leq \alpha_{j+1} - \alpha_{j},& \forall \activity\in\activitySchedule,\,\forall\scenario\in\scenarioSet,\,\forall j\in [J-1]\\
        \delayVariable{\activity} &= \sum_{j=0}^J \delta_{\activity, \scenario, j},&\forall \activity\in\activitySchedule,\,\forall\scenario\in\scenarioSet.
    \end{align}
\end{subequations}

The complete extended compact MILP (assuming constraints~\eqref{eq:non-linear-delay-constraint} 
are linearized) formulation for the stochastic tail assignment problem is:
\begin{subequations}\label{eq:complete-stochastic-milp}
    \begin{align}
        \min_{\decision,\delta}\quad & \sum_{\leg\in\legSet}\sum_{\aircraft\in\aircraftSet}
        \sum_{\arc\in\outneighbors{\leg}\cap\arcSubSet{\aircraft}}\legCost\decisioni \nonumber\\
        & + \sum_{\aircraft\in\aircraftSet}\sum_{\arc\in\arcSubSet{\aircraft}} \connectionCost\decisioni \nonumber\\
        & + \frac{1}{|S|}\sum_{\scenario\in\scenarioSet}\sum_{\activity\in\activitySchedule} 
        \sum_{j=1}^J \beta_j \delta_{\activity, \scenario, j} \\
        \text{s.t.}\quad & \eqref{eq:route-constraint-1},\eqref{eq:route-constraint-2},
        \eqref{eq:route-constraint-3},\eqref{eq:set-partitionning}, \eqref{eq:delay-constraints}, \eqref{eq:delay-cost-linearization} \\
        & \decisioni\in\binarySet,\quad \forall\aircraft\in\aircraftSet,\,\forall\arc\in\arcSubSet{\aircraft},\\
        & \delta_{\activity, \scenario, j}\in\bbR,\quad \forall j\in \intInterval{0}{J},\,
        \forall \activity\in\activitySchedule,\,\forall\scenario\in\scenarioSet.
    \end{align}
\end{subequations}

The main strength of this formulation is that it is quite straightforward to implement, and it is very effective on small instances, up to $\approx$300 flight legs with 1 scenario.
However, it can struggle to solve larger instances in reasonable time.
Indeed, when the number of aircraft and flight legs increase, the number of arcs in the graph can become quite large, especially for a high number of scenarios, which results in too many variables for solving the formulation in reasonable time.

\subsubsection{Dantzig-Wolfe decomposition}
In formulation \eqref{eq:complete-stochastic-milp}, routing constraints are block constraints decomposable by aircraft, while set-partitioning constraints \eqref{eq:set-partitionning} are complicating constraints linking all aircraft together.
This constraints structure is well-suited for a Dantzig-Wolfe decomposition~\parencite{conejoDecompositionTechniquesMathematical2006}.

Decision variables can be partitioned by aircraft $\solution = (\solution^\aircraft)_{\aircraft\in\aircraftSet}$, and we denote by $\calB_\aircraft$ the block constraints associated with aircraft $\aircraft$:

\begin{align}
    \calB_\aircraft &= \left\{\solution^\aircraft\in\binarySet^{|\arcSubSet{\aircraft}|} \mid \text{\eqref{eq:flow-conservation}, \eqref{eq:source-sink}} \right\}, \\
    \text{with }\forall\vertex\in\activitySchedule,\, &\sum_{\arc\in\inneighbors{\vertex}\cap\arcSubSet{\aircraft}}\decisioni = \sum_{\arc\in\outneighbors{\vertex}\cap\arcSubSet{\aircraft}} \decisioni \label{eq:flow-conservation} \\
    &\sum_{\arc\in\outneighbors{s}\cap\arcSubSet{\aircraft}}\decisioni = \sum_{\arc\in\inneighbors{t}\cap\arcSubSet{\aircraft}}\decisioni = 1 \label{eq:source-sink}
\end{align}

By definition, each element of $\calB_\aircraft$ is a feasible aircraft route $\route\in\feasibleRouteSet{\aircraft}$ for aircraft $\aircraft$, i.e. a feasible path in the associated connection sub-graph $\disubgraph{\aircraft}$.
We denote by $(z_\route^\aircraft)_{\route\in\feasibleRouteSet{\aircraft}}$ these elements.

A Dantzig-Wolfe decomposition then introduces new decision variables $\alpha_\route^\aircraft$, indicating if route $\route\in\feasibleRouteSet{\aircraft}$ is operated by aircraft $\aircraft\in\aircraftSet$.
By definition, we have that:
\begin{equation}
    \forall\aircraft\in\aircraftSet,\,\forall\arc\in\arcSubSet{\aircraft},\,\solution_\arc^\aircraft = \sum_{\route\ni\arc}\alpha_\route^\aircraft.
\end{equation}
The notation $\route\ni\arc$, means we sum over all routes $\route\in\feasibleRouteSet{\aircraft}$ that contain arc $\arc$.

We can replace it in \eqref{eq:complete-stochastic-milp} to obtain the following set-partitioning formulation:
\begin{subequations}
    \begin{align}
        \min_\alpha\quad & \sum_{\aircraft\in\aircraftSet}\sum_{\route\in\feasibleRouteSet{\aircraft}} c_{\route}^\aircraft \alpha_\route^\aircraft\\
        \text{s.t.}\quad & \sum_{\aircraft\in\aircraftSet}\sum_{\route\ni\leg, \route\in\feasibleRouteSet{\aircraft}} \alpha_\route^\aircraft = 1, & \forall\leg\in\legSet\label{eq:coll1}\\
        & \sum_{\route\in\feasibleRouteSet{\aircraft}} \alpha_\route^\aircraft = 1, & \forall\aircraft\in\aircraftSet\label{eq:coll2}\\
        & \alpha_\route^\aircraft\in\binarySet, & \forall\aircraft\in\aircraftSet,\,\forall\route\in\feasibleRouteSet{\aircraft}.
    \end{align}
\end{subequations}

\section{Restricted master heuristic}\label{appendix:restricted-master-heuristic}
In order to find a good solution of~\eqref{eq:column-generation}, we can use a restricted master heuristic.
For this, we retrieve the generated columns from Algorithm~1 and solve~\eqref{eq:column-generation} restricted to these columns.
This heuristic has the advantage of not needing to implement a full branch-and-price to find the optimal solution, while giving a good quality solution in practice.
\begin{equation}\label{eq:restricted-master-heuristic}
    \begin{aligned}
        \min_\decision\quad & \sum_{\aircraft\in\aircraftSet}\sum_{\route\in\feasibleRouteSubSet{\aircraft}} c_{\route}^\aircraft \decision_\route^\aircraft\\
        \text{s.t.}\quad & \sum_{\aircraft\in\aircraftSet}\sum_{\route\ni\leg, \route\in\feasibleRouteSubSet{\aircraft}} \decision_\route^\aircraft = 1, & \forall\leg\in\legSet &\\
        & \sum_{\route\in\feasibleRouteSubSet{\aircraft}} \decision_\route^\aircraft \leq 1, & \forall\aircraft\in\aircraftSet&\\
        & \decision_\route^\aircraft\in\binarySet, & \forall\aircraft\in\aircraftSet,\,\forall\route\in\feasibleRouteSubSet{\aircraft} &
    \end{aligned}
\end{equation}

\clearpage
\printbibliography

@article{barnhartApplicationsOperationsResearch2003,
  title = {Applications of {{Operations Research}} in the {{Air Transport Industry}}},
  author = {Barnhart, Cynthia and Belobaba, Peter and Odoni, Amedeo R.},
  year = {2003},
  journaltitle = {Transportation Science},
  shortjournal = {Transportation Science},
  volume = {37},
  number = {4},
  pages = {368--391},
  issn = {0041-1655, 1526-5447},
  doi = {10.1287/trsc.37.4.368.23276},
  url = {https://pubsonline.informs.org/doi/10.1287/trsc.37.4.368.23276},
  urldate = {2024-08-27},
  langid = {english}
}

@article{biroliniDayaheadAircraftRouting2023,
  title = {Day-Ahead Aircraft Routing with Data-Driven Primary Delay Predictions},
  author = {Birolini, Sebastian and Jacquillat, Alexandre},
  year = {2023},
  journaltitle = {European Journal of Operational Research},
  volume = {310},
  number = {1},
  pages = {379--396},
  publisher = {Elsevier},
  url = {https://www.sciencedirect.com/science/article/pii/S0377221723001698},
  urldate = {2024-04-09}
}

@online{bolusaniSCIPOptimizationSuite2024,
  title = {The {{SCIP Optimization Suite}} 9.0},
  author = {Bolusani, Suresh and Besançon, Mathieu and Bestuzheva, Ksenia and Chmiela, Antonia},
  year = {2024},
  url = {https://arxiv.org/abs/2402.17702v1},
  urldate = {2024-10-08},
  langid = {english},
  organization = {arXiv.org}
}

@book{conejoDecompositionTechniquesMathematical2006,
  title = {Decomposition Techniques in Mathematical Programming: Engineering and Science Applications},
  shorttitle = {Decomposition Techniques in Mathematical Programming},
  author = {Conejo, Antonio J. and Castillo, Enrique and Minguez, Roberto and Garcia-Bertrand, Raquel},
  year = {2006},
  publisher = {Springer Science \& Business Media},
  url = {https://books.google.fr/books?hl=fr&lr=&id=gdJDAAAAQBAJ&oi=fnd&pg=PA3&dq=conejo+decoposition+2006&ots=rSDiVJ5iD5&sig=wVIVvh6pa8pPBZiP5yNJDhWMv7A},
  urldate = {2025-01-19}
}

@book{desaulniersColumnGeneration2006,
  title = {Column Generation},
  author = {Desaulniers, Guy and Desrosiers, Jacques and Solomon, Marius M.},
  year = {2006},
  volume = {5},
  publisher = {Springer Science \& Business Media},
  url = {https://books.google.com/books?hl=fr&lr=&id=8-5fCG2lumcC&oi=fnd&pg=PR5&dq=column+generation+desaulniers&ots=BYsH4-MaKl&sig=5gR8sA7zjfLQFCOdTZZKFCCsE3s},
  urldate = {2024-11-08}
}

@incollection{gabteniHybridColumnGeneration2006,
  title = {A {{Hybrid Column Generation}} and {{Constraint Programming Optimizer}} for the {{Tail Assignment Problem}}},
  booktitle = {Integration of {{AI}} and {{OR Techniques}} in {{Constraint Programming}} for {{Combinatorial Optimization Problems}}},
  author = {Gabteni, Sami and Grönkvist, Mattias},
  editor = {Beck, J. Christopher and Smith, Barbara M.},
  editora = {Hutchison, David and Kanade, Takeo and Kittler, Josef and Kleinberg, Jon M. and Mattern, Friedemann and Mitchell, John C. and Naor, Moni and Nierstrasz, Oscar and Pandu Rangan, C. and Steffen, Bernhard and Sudan, Madhu and Terzopoulos, Demetri and Tygar, Dough and Vardi, Moshe Y. and Weikum, Gerhard},
  editoratype = {redactor},
  year = {2006},
  volume = {3990},
  pages = {89--103},
  publisher = {Springer Berlin Heidelberg},
  location = {Berlin, Heidelberg},
  doi = {10.1007/11757375_9},
  url = {http://link.springer.com/10.1007/11757375_9},
  urldate = {2024-08-27},
  isbn = {978-3-540-34306-6 978-3-540-34307-3}
}

@article{glombStochasticOptimizationApproach2023,
  title = {A Stochastic Optimization Approach for Optimal {{Tail Assignment}} with Knowledge-Based Predictive Maintenance},
  author = {Glomb, Lukas and Liers, Frauke and Rösel, Florian},
  year = {2023},
  journaltitle = {CEAS Aeronautical Journal},
  shortjournal = {CEAS Aeronaut J},
  volume = {14},
  number = {3},
  pages = {715--728},
  issn = {1869-5582, 1869-5590},
  doi = {10.1007/s13272-023-00663-0},
  url = {https://link.springer.com/10.1007/s13272-023-00663-0},
  urldate = {2024-08-27},
  langid = {english}
}

@book{gronkvistTailAssignmentProblem2005a,
  title = {The Tail Assignment Problem},
  author = {Grönkvist, Mattias},
  year = {2005},
  publisher = {Citeseer},
  url = {https://citeseerx.ist.psu.edu/document?repid=rep1&type=pdf&doi=e91781751909e9f68744c9f6558fa8e68e083aa0},
  urldate = {2024-08-27}
}

@software{gurobioptimizationllcGurobiOptimizerReference2024,
  title = {Gurobi {{Optimizer Reference Manual}}},
  author = {Gurobi Optimization, LLC},
  year = {2024},
  url = {https://www.gurobi.com}
}

@article{haneFleetAssignmentProblem1995,
  title = {The Fleet Assignment Problem: {{Solving}} a Large-Scale Integer Program},
  shorttitle = {The Fleet Assignment Problem},
  author = {Hane, Christopher A. and Barnhart, Cynthia and Johnson, Ellis L. and Marsten, Roy E. and Nemhauser, George L. and Sigismondi, Gabriele},
  year = {1995},
  journaltitle = {Mathematical Programming},
  shortjournal = {Mathematical Programming},
  volume = {70},
  number = {1--3},
  pages = {211--232},
  issn = {0025-5610, 1436-4646},
  doi = {10.1007/BF01585938},
  url = {http://link.springer.com/10.1007/BF01585938},
  urldate = {2024-09-06},
  langid = {english}
}

@incollection{irnichShortestPathProblems2005,
  title = {Shortest {{Path Problems}} with {{Resource Constraints}}},
  booktitle = {Column {{Generation}}},
  author = {Irnich, Stefan and Desaulniers, Guy},
  editor = {Desaulniers, Guy and Desrosiers, Jacques and Solomon, Marius M.},
  year = {2005},
  pages = {33--65},
  publisher = {Springer-Verlag},
  location = {New York},
  doi = {10.1007/0-387-25486-2_2},
  url = {http://link.springer.com/10.1007/0-387-25486-2_2},
  urldate = {2023-12-18},
  isbn = {978-0-387-25485-2},
  langid = {english}
}

@article{khaledCompactOptimizationModel2018,
  title = {A Compact Optimization Model for the Tail Assignment Problem},
  author = {Khaled, Oumaima and Minoux, Michel and Mousseau, Vincent and Michel, Stéphane and Ceugniet, Xavier},
  year = {2018},
  journaltitle = {European Journal of Operational Research},
  volume = {264},
  number = {2},
  pages = {548--557},
  publisher = {Elsevier},
  url = {https://www.sciencedirect.com/science/article/pii/S0377221717305866},
  urldate = {2024-08-27}
}

@article{khaledMulticriteriaRepairRecovery2018,
  title = {A Multi-Criteria Repair/Recovery Framework for the Tail Assignment Problem in Airlines},
  author = {Khaled, Oumaima and Minoux, Michel and Mousseau, Vincent and Michel, Stéphane and Ceugniet, Xavier},
  year = {2018},
  journaltitle = {Journal of Air Transport Management},
  volume = {68},
  pages = {137--151},
  publisher = {Elsevier},
  url = {https://www.sciencedirect.com/science/article/pii/S0969699717300698},
  urldate = {2024-08-27}
}

@article{lanPlanningRobustAirline2006a,
  title = {Planning for {{Robust Airline Operations}}: {{Optimizing Aircraft Routings}} and {{Flight Departure Times}} to {{Minimize Passenger Disruptions}}},
  shorttitle = {Planning for {{Robust Airline Operations}}},
  author = {Lan, Shan and Clarke, John-Paul and Barnhart, Cynthia},
  year = {2006},
  journaltitle = {Transportation Science},
  shortjournal = {Transportation Science},
  volume = {40},
  number = {1},
  pages = {15--28},
  issn = {0041-1655, 1526-5447},
  doi = {10.1287/trsc.1050.0134},
  url = {https://pubsonline.informs.org/doi/10.1287/trsc.1050.0134},
  urldate = {2024-10-08},
  langid = {english}
}

@article{marlaRobustOptimizationLessons2018,
  title = {Robust Optimization: {{Lessons}} Learned from Aircraft Routing},
  shorttitle = {Robust Optimization},
  author = {Marla, Lavanya and Vaze, Vikrant and Barnhart, Cynthia},
  year = {2018},
  journaltitle = {Computers \& Operations Research},
  volume = {98},
  pages = {165--184},
  publisher = {Elsevier},
  url = {https://www.sciencedirect.com/science/article/pii/S0305054818301023},
  urldate = {2024-08-27}
}

@article{mccormickComputabilityGlobalSolutions1976,
  title = {Computability of Global Solutions to Factorable Nonconvex Programs: {{Part I}} — {{Convex}} Underestimating Problems},
  shorttitle = {Computability of Global Solutions to Factorable Nonconvex Programs},
  author = {McCormick, Garth P.},
  year = {1976},
  journaltitle = {Mathematical Programming},
  shortjournal = {Mathematical Programming},
  volume = {10},
  number = {1},
  pages = {147--175},
  issn = {0025-5610, 1436-4646},
  doi = {10.1007/BF01580665},
  url = {http://link.springer.com/10.1007/BF01580665},
  urldate = {2024-10-08},
  langid = {english}
}

@article{parmentierAircraftRoutingCrew2020,
  title = {Aircraft Routing and Crew Pairing: {{Updated}} Algorithms at {{Air France}}},
  shorttitle = {Aircraft Routing and Crew Pairing},
  author = {Parmentier, Axel and Meunier, Frédéric},
  year = {2020},
  journaltitle = {Omega},
  shortjournal = {Omega},
  volume = {93},
  pages = {102073},
  issn = {0305-0483},
  doi = {10.1016/j.omega.2019.05.009},
  url = {https://www.sciencedirect.com/science/article/pii/S0305048317306837},
  urldate = {2022-02-01},
  langid = {english}
}

@unpublished{parmentierAlgorithmsNonLinearStochastic2017,
  title = {Algorithms for {{Non-Linear}} and {{Stochastic Resource Constrained Shortest Paths}}},
  author = {Parmentier, Axel},
  year = {2017},
  eprint = {1504.07880},
  eprinttype = {arXiv},
  eprintclass = {cs},
  url = {http://arxiv.org/abs/1504.07880},
  urldate = {2022-04-01}
}

@article{sadykovPrimalHeuristicsBranch2019,
  title = {Primal {{Heuristics}} for {{Branch}} and {{Price}}: {{The Assets}} of {{Diving Methods}}},
  shorttitle = {Primal {{Heuristics}} for {{Branch}} and {{Price}}},
  author = {Sadykov, Ruslan and Vanderbeck, François and Pessoa, Artur and Tahiri, Issam and Uchoa, Eduardo},
  year = {2019},
  journaltitle = {INFORMS Journal on Computing},
  shortjournal = {INFORMS Journal on Computing},
  volume = {31},
  number = {2},
  pages = {251--267},
  issn = {1091-9856, 1526-5528},
  doi = {10.1287/ijoc.2018.0822},
  url = {https://pubsonline.informs.org/doi/10.1287/ijoc.2018.0822},
  urldate = {2025-01-15},
  langid = {english}
}

@report{uchoaOptimizingColumnGeneration2024,
  title = {Optimizing with {{Column Generation}}: {{Advanced Branch-Cut-and-Price Algorithms}} ({{Part I}})},
  author = {Uchoa, Eduardo and Pessoa, Artur and Moreno, Lorenza},
  year = {2024},
  institution = {Cadernos do LOGIS-UFF, Universidade Federal Fluminense, Engenharia de Produ},
  url = {https://optimization-online.org/2024/08/optimizing-with-column-generation-advanced-branch-cut-and-price-algorithms-part-i/}
}

@article{yanRobustAircraftRouting2018,
  title = {Robust {{Aircraft Routing}}},
  author = {Yan, Chiwei and Kung, Jerry},
  year = {2018},
  journaltitle = {Transportation Science},
  shortjournal = {Transportation Science},
  volume = {52},
  number = {1},
  pages = {118--133},
  issn = {0041-1655, 1526-5447},
  doi = {10.1287/trsc.2015.0657},
  url = {https://pubsonline.informs.org/doi/10.1287/trsc.2015.0657},
  urldate = {2024-04-09},
  langid = {english}
}

\end{document}